
\documentclass[a4paper,fleqn]{cas-sc}

\usepackage[numbers]{natbib}

\def\tsc#1{\csdef{#1}{\textsc{\lowercase{#1}}\xspace}}
\tsc{WGM}
\tsc{QE}
\tsc{EP}
\tsc{PMS}
\tsc{BEC}
\tsc{DE}

\newtheorem{theorem}{Theorem}[section]
\newtheorem{lemma}{Lemma}[section]

\newtheorem{definition}{Definition}[section]

\usepackage{graphics,graphicx,color}
\usepackage{rotating}
\usepackage{amssymb,amsmath}
\usepackage{eucal}
\usepackage{algorithm}
\usepackage[noend]{algpseudocode}
\usepackage{epsfig,epstopdf}
\usepackage{geometry}
\usepackage{subcaption}
\usepackage[export]{adjustbox}

\newenvironment{proof}[1][Proof]{\noindent\textbf{#1.} }{\ \rule{0.5em}{0.5em}}


\begin{document}
\let\WriteBookmarks\relax
\def\floatpagepagefraction{1}
\def\textpagefraction{.001}
\shorttitle{Mathematical study of a new coupled electro-thermo radiofrequency model of cardiac tissue}
\shortauthors{Bendahmane, Ouakrim, Ouzrour and Zagour}

\title[mode = title]{
Mathematical study of a new coupled electro-thermo radiofrequency model of cardiac tissue} 

\author[1]{Mostafa Bendahmane}\cormark[1]
\address[1]{Institut de Math\'ematiques de Bordeaux and INRIA-Carmen Bordeaux Sud-Ouest, Universit\'e de Bordeaux, Bordeaux, France} 
\ead{mostafa.bendahmane@u-bordeaux.fr}

\author[2]{Youssef Ouakrim}
\address[2]{Laboratoire de Math\'ematiques, Mod\'elisation et Physique Appliqu\'ee, Ecole Normale Sup\'erieure de F\`es, Universit\'e Sidi Mohamed Ben Abdellah, Maroc.}
\ead{youssef.ouakrim@usmba.ac.ma}

\author[2]{Yassine Ouzrour}
\ead{yassine.ouzrour@usmba.ac.ma}
\author[3]{Mohamed Zagour}
\address[3]{Euromed University of Fes, UEMF, Morocco}
\ead{ zagourmohamed@gmail.com}

\begin{abstract}
This paper presents a  nonlinear  reaction-diffusion-fluid system that simulates radiofrequency ablation within cardiac tissue. 
The model conveys the dynamic evolution of temperature and electric potential in both the fluid and solid regions, along with the evolution of velocity within the solid region.
By formulating the system that describes the phenomena across the entire domain, encompassing both solid and fluid phases, we proceed to an analysis of well-posedness, considering a broad class of right-hand side terms. The system involves parameters such as heat conductivity, kinematic viscosity, and electrical conductivity, all of which exhibit nonlinearity contingent upon the  temperature variable.
The mathematical analysis extends to establishing the existence of a global solution, employing the Faedo-Galerkin method in a three-dimensional space. 
To enhance the practical applicability of our theoretical results, we complement our study with a series of numerical experiments. We implement the discrete system using the finite element method for spatial discretization and an Euler scheme for temporal discretization. Nonlinear parameters are linearized through decoupling systems, as introduced in our continuous analysis. These experiments are conducted to demonstrate and validate the theoretical findings we have established.
\end{abstract}
\begin{keywords}
Bio-heat equation\sep
Navier-Stokes equation\sep
Thermistor problem \sep
Radiofrequency ablation\sep
Cardiac tissue\sep
Finite element method. 
\end{keywords} 
\maketitle
 \section{Introduction and problem statement}  
\subsection{Background}
The paper introduces a mathematical model for tracking the evolution of an invasive medical technique that is widely employed across various medical disciplines, specifically Radiofrequency Ablation (RFA).
Our goal is to improve the applicability and efficiency of this technique, particularly in the treatment of conditions like cardiac arrhythmia and the ablation of tumors located in different regions of the body.
The RFA technique is characterized by its exceptional precision, high effectiveness, and low mortality rates. 
RFA has evolved into a widely accepted and highly effective treatment for a variety of cardiac arrhythmias, including ventricular arrhythmias, atrial fibrillation, and atrial tachycardia, among others.
The operation of this treatment technique is based on the application of high-frequency electrical current within specific myocardial regions (see Figure \ref{Fig:illustration}).
This process leads to the generation of elevated temperatures (typically exceeding $50\, ^{\circ}$C) within the cardiac tissue, resulting in cell death.
RFA procedures are mathematically represented through a thermistor problem, which simulates the heating of a conductive material by inducing electric current at a specific boundary region for a defined time period. This model is formulated as a coupled system of nonlinear partial differential equations (PDEs), specifically consisting of the heat equation with Joule heating as the heat source and the current conservation equation with temperature-dependent electrical conductivity.

The need to control the results of RFA experiments has driven various research efforts. 
For instance, there has been a focus on predicting tissue temperatures to provide real-time guidance during the process, as established in~\cite{johnson2002,villard2005}. 
These investigations have primarily centered on optimal control and inverse problems, with a specific emphasis on identifying the frequency factor and energy involved in the thermal damage function for different tissue types. In this context, we refer to the two works by Meinlschmidt et al.~\cite{MEINLSCHMIDT-part1,MEINLSCHMIDT-part2}, which addressed boundary condition identification problems through optimization techniques. The authors established the well-posedness and optimality conditions.
A recent contribution by Huaman et al.~\cite{huaman2023local} explores the local null controllability of the thermistor problem,  considering spatially distributed control.

In the following subsections, we describe the coupled system that models the dynamics of radiofrequency ablation (RFA) treatment in the presence of a specific fluid, namely, blood. 
Let $T>0$ represent the final time, and consider bounded open subsets in three-dimensional space denoted as $\Omega_{b}$ and $\Omega_{ts}$. 
These subsets correspond respectively to the blood vessel and the cardiac tissue and possess piecewise smooth boundaries denoted by $\Sigma_{b}$ and $\Sigma_{ts}$. 
For visual reference, Figure \ref{Fig:illustration}  illustrates the radiofrequency ablation procedure within cardiac tissue, showing an overview of different regions within the domain. The readers can consult the configuration geometry and boundaries conditions of our model in Figure \ref{Domain}.
\begin{figure}[pos=!ht]
	\begin{center}
		\includegraphics[width=.9\linewidth
		]
		{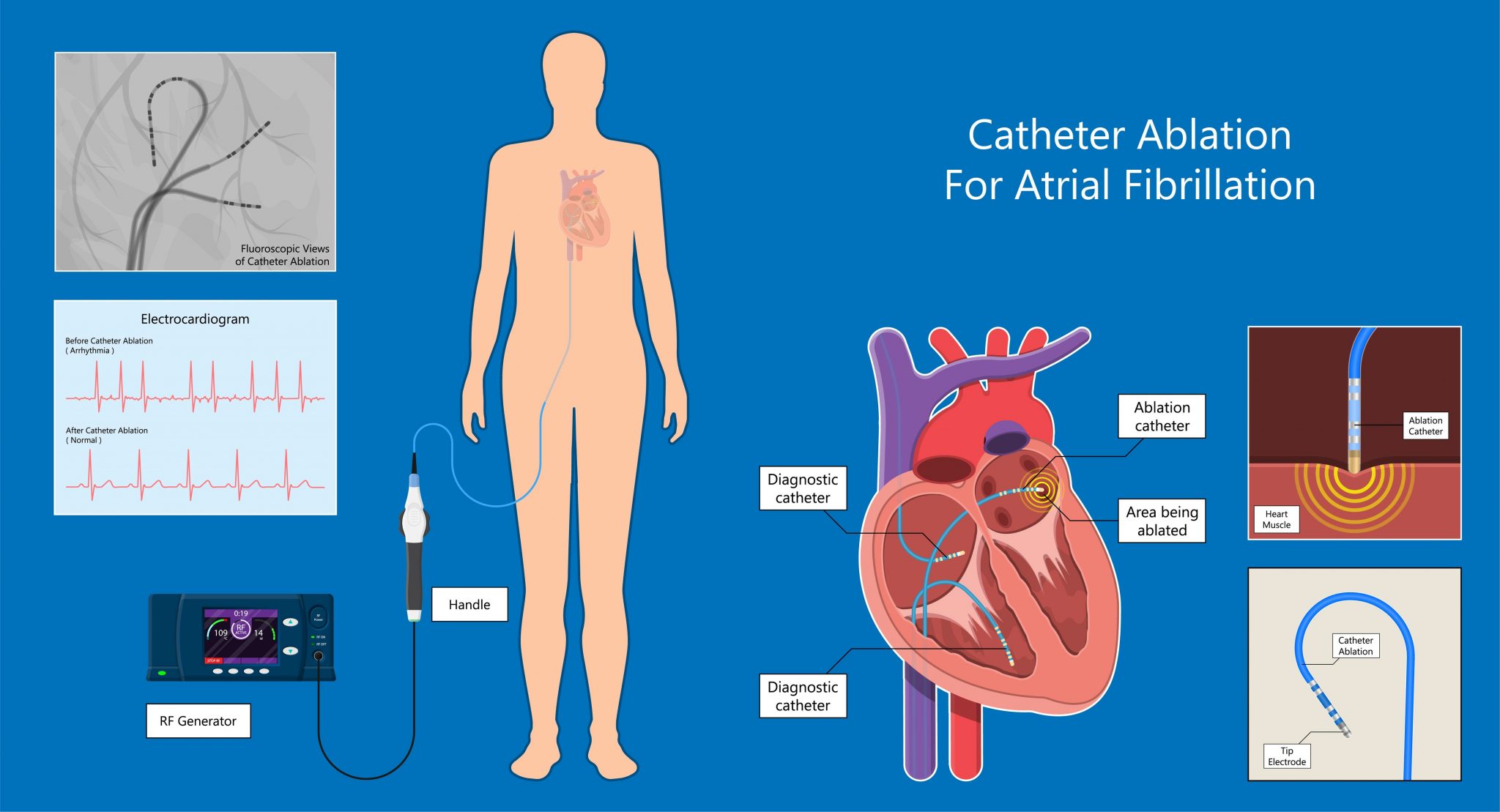}
	\end{center}
	\caption{Illustration of radiofrequency ablation procedure within cardiac tissue, highlighting different regions in the domain. \url{https://www.stopafib.org/procedures-for-afib/catheter-ablation/}
	}
	\label{Fig:illustration} 
\end{figure}
\subsection{Electrothermal field in thermistor models}

RFA utilizes alternating electric currents within the radiofrequency range ($450$ - $500$ kHz) to induce controlled thermal damage in tissues. This technique is commonly employed for the treatment of conditions such as liver, lung, and kidney tumors, varicose veins, as well as cardiac arrhythmias. During this procedure, an electrode is inserted under image guidance to precisely target and heat the tissue.
When alternating electric fields are applied to resistive materials like biological tissue, two forms of heating occur; conduction losses resulting from resistive heating due to ion movement and dielectric losses caused by the rotation of molecules in the alternating electric field. However, within the frequency range below $1~$MHz, dielectric losses become negligible \cite{Berjano2006}.

\noindent In our model, we exclusively focus on resistive heating. The resulting electric field within the tissue can be accurately described by the Laplace equation:
\begin{equation}
\nabla \cdot (\sigma_{ts}\nabla \varphi_{ts}) = 0 \quad \text{in } \Omega_{T,ts} := (0, T) \times \Omega_{ts},
\end{equation}
where, $\sigma_{ts}$ denotes the electrical conductivity of the material in Siemens per meter $(\mathrm{S/m})$, and $\varphi_{ts}$ represents the electric potential in Volts $(V)$. 
The electric field intensity, denoted as $\mathbf{E}$ in units of Volts per meter $(\mathrm{V/m})$, and the current density $\mathbf{J}$ in Amperes per square meter $(\mathrm{A/m}^2)$ are calculated from the following relationships: $\mathbf{E} = -\nabla \varphi$ and $\mathbf{J} = \sigma_{ts} \mathbf{E}$.
The local power density responsible for tissue heating is determined by multiplying the current density $\boldsymbol{J}$ by the electric field intensity $\boldsymbol{E}$. This power density is then used to compute the temperature distribution within the tissue using the heat-transfer equation \cite{Dieter2010}.
\\
This equation accurately characterizes the situation when the ablation catheter is within the tissue, although it's important to acknowledge that the electric field might also extend into the adjacent blood vessel. As a result, both scenarios are considered simultaneously.

\subsection{Bio-heat distribution}
During radiofrequency ablation (RFA), electrical energy is delivered, causing a rise in temperature in both the blood vessel and the cardiac tissue surrounding the catheter. In our study we employs the Pennes equation to describe the bio-heat transfer model:
\begin{equation}\label{bio-heat-system}
	\begin{split}
		\rho_{1,i}\rho_{2,i}\,\theta_{t,i} - \nabla \cdot(\eta_i(\theta_i) \nabla \theta_i) &=\mathcal{S}_i+\rho_{m,i}\qquad \hbox{ in } \,\Omega_{T,i}:=(0,T)\times \Omega_i,\qquad\text{for $i=b,\,ts$},
	\end{split}
\end{equation}
where $\rho_{1,i}$ ($kg/m^3$), $\rho_{2,i}$ ($J/kg\times K$), and $\eta_i$ represent the density, specific heat, and thermal conductivity functions dependent on the media (blood vessel for $i=b$ or the tissue for $i=ts$), respectively. 
Recall that $\Omega_{b}$ and $\Omega_{ts}$ represent the tissue and blood vessel domains. The coefficient $\rho_{m,i}$ models the metabolic heat generation for $i=b,ts$. 
Finally, the electromagnetic heat source $\mathcal{S}_i$ ($W/m^3$) due to radiofrequency heating is given by:
\begin{equation}\label{eq-S}
	\mathcal{S}_i=\sigma_i(\theta_i)|{\nabla \varphi_i}|^2 \qquad \text{ in }\Omega_{T,i},
\qquad\text{for $i=b,\,ts$}.
\end{equation}
It's important to mention that during RFA, the electromagnetic heat source ($\mathcal{S}\simeq \mathcal{O}(10^8)$) is significantly greater in magnitude compared to the metabolic heat source ($\rho_m \simeq \mathcal{O}(10^3)$). Therefore, in this work, we neglect the metabolic heat generation $\rho_{m,i}$ for $i=b,ts$. 
In our bio-heat model \eqref{bio-heat-system}, the thermal and electrical conductivities of the blood and the tissue depend on the temperature of the media.

\subsection{Blood flow model}
The blood flow can be characterized as an incompressible Navier–Stokes fluid, governed by the following system:
\begin{equation}
    \begin{array}{rclll}
        \rho\left(\partial_t \boldsymbol{u}+(\boldsymbol{u}\cdot \nabla) \boldsymbol{u}\right)
        -\nabla \cdot(\mu(\theta_b) \mathbb{D}(\boldsymbol{u}))+\nabla \pi &=& \boldsymbol{F} & \hbox { in } &\Omega_{T,b}, \\
        \nabla \cdot \boldsymbol{u} &=& 0 & \hbox { in } &\Omega_{T,b}.
    \end{array}
    \label{1.2} 
\end{equation}
Here, $\boldsymbol{u}$ represents the flow velocity, and $\pi$ denotes the pressure scaled by the density $\rho$. The parameter $\mu$ signifies the dynamic viscosity of the blood, equivalent to $\rho\nu$, where $\nu$ is the kinematic viscosity that depends on the temperature $\theta_b$ of the blood vessel. 
In  \eqref{1.2}, the functions $\mathbb D(\boldsymbol{u}) =\frac 1 2 \left( \nabla u + \nabla u ^T  \right) $ and $\boldsymbol{F}$ represents the strain rate tensor and the external force, respectively.\\
\noindent In the context of initial data for fluid velocity, it is essential to provide a carefully prescribed value. Typically, this velocity field should be divergence-free to be considered admissible. In many hemodynamic simulations, the actual value of this quantity is often unknown. As a result, it is frequently set to zero throughout the domain, or, in a more informed approach, it is estimated as the solution to a stationary Stokes problem.
The issue of boundary conditions is of utmost significance when simulating blood flow. Extensive literature has been dedicated to addressing this topic in recent years, with comprehensive reviews available, such as in \cite{FI14,QMV17}.

\subsection{Electro-thermo-fluid model}
The electro-thermo-fluid model addressed in this paper is formulated as follows, summarizing all the equations described above:\begin{equation}\label{S1}
		\left\{
	\begin{split}
		\rho\left(\partial_t \boldsymbol{u}+(\boldsymbol{u}\cdot \nabla) \boldsymbol{u}\right)
		-\nabla \cdot(\mu(\theta_b) \mathbb{D}(\boldsymbol{u}))+\nabla \pi &= \boldsymbol{F},   \,\,\,\quad \qquad \qquad \hbox { in } \Omega_{T,b}, \\\\
		\nabla \cdot \boldsymbol{u} &= 0,\qquad \qquad \qquad \hbox { in } \Omega_{T,b},\\\\
		\partial_t \theta_b - \nabla \cdot(\eta_b(\theta_b) \nabla \theta_b) + \boldsymbol{u} \cdot \nabla \theta_b
		-(\sigma_b(\theta_b)\nabla \varphi_{b})\cdot \nabla \varphi_{b} &=0, \qquad \qquad \qquad \hbox { in }  \Omega_{T,b}  \\\\
		- \operatorname{div}(\sigma_b(\theta_b) \nabla \varphi_{b})&=0, \qquad \qquad \qquad \hbox { in } \Omega_{T,b} ,\\\\
		\partial_t \theta_{ts} - \nabla \cdot(\eta_{ts}(\theta_{ts}) \nabla \theta_{ts}) 
		-(\sigma_{ts}(\theta_{ts})\nabla \varphi_{ts})\cdot \nabla \varphi_{ts} &=0, \qquad \qquad \qquad \hbox { in }  \Omega_{T,ts}  \\\\
		- \operatorname{div}(\sigma_{ts}(\theta_{ts}) \nabla \varphi_{ts})&=0, \qquad \qquad \qquad \hbox { in } \Omega_{T,ts} .
	\end{split}
	\right.
\end{equation} 
We complete the system \eqref{S1} with the following boundary conditions 
\begin{equation}\begin{split}\label{S2} 
		&\boldsymbol{u}=\boldsymbol{u}_d \,\,  \hbox { on } \Sigma_{T,b}, 
		\\
		&\theta_b=\bar{\theta} \,\,  \hbox { on } \Sigma^D_{T,b}\quad \text{and}\quad \theta_{ts}=\bar{\theta} \,\,  \hbox { on } \Sigma^D_{T,ts},\\ 
		&\theta_b=\theta_{ts} \,\,  \hbox { on } \Sigma_{T,7}\cup \Sigma_{T,8}
		\quad \text{and}\quad (\eta_b(\theta_b) \nabla \theta_b) \cdot \boldsymbol{n}_b=-(\eta_{ts}(\theta_{ts})\nabla \theta_{ts}) \cdot \boldsymbol{n}_{b}\,\,  \hbox { on } \Sigma_{T,7}\cup \Sigma_{T,8},\\
		& \sigma_i(\theta_i) \nabla \varphi_i \cdot \boldsymbol{n}_b=0 \,\,\hbox { on }\Sigma_{T,7} \quad \text{and}
		\quad \varphi_i=\varphi_{i,d}  \,\,\hbox { on }  \Sigma_{T,i}\setminus \Sigma_{T,7},
	\end{split}
\end{equation}
where $\Sigma_{T,i}:=(0,T)\times \Sigma_i$ for $i=b,ts$. The initial data are 
\begin{equation}\label{S3}
	\begin{split}
		&\theta_i(0,.)=\bar{\theta} \quad \text{ on }  \quad \Omega_i, \qquad  \forall i=b,ts. \\
		&   \boldsymbol{u}(0,.)=\boldsymbol{u}_0 \quad \text{ on } \quad \Omega_b.
	\end{split}
\end{equation}  
Here, we divide the boundaries of blood domain $\Omega_b$ and tissue domain $\Omega_{ts}$ into regular parts $\Sigma^D_i$, $\Sigma^N_i$ and $\Sigma_8$, as illustrated in Figure \ref{Domain}. Thus,
\begin{align*}
	&\displaystyle \Sigma_i:= \Sigma^D_{i}\cup \Sigma_{7}\cup \Sigma_8,
	\quad \Sigma^D_b:= \cup_{i=0}^3 \Sigma_{i},\quad
	\displaystyle \Sigma^D_{ts}:=\cup_{i=4}^6 \Sigma_{i},
\end{align*}
and
\begin{equation*}
\begin{split}
&\theta_{d}=
\begin{cases}
	&\theta_s \quad  \hbox {on } \Sigma_{T,8},\\
	&\bar{\theta}  \quad \,\,\hbox { on }  \Sigma_{T}\setminus (\Sigma_{T,7}\cup \Sigma_{T,8}).
\end{cases}, \quad
\varphi_{i,d}=
\begin{cases}
	&\varphi_d \quad  \hbox {on } \Sigma_{T,8},\\
	&0  \quad \,\,\hbox { on }  \Sigma_{T,i}\setminus (\Sigma_{T,7}\cup \Sigma_{T,8}).
\end{cases}
\\ 
&\qquad \text{and}\quad\qquad  \boldsymbol{u}_d=
\begin{cases}
	&\boldsymbol{u}_e \qquad \,\,\text{on}\quad \text{$\Sigma_{T,1}\cup \Sigma_{T,3}$} ,\\
	&\boldsymbol{u}_s  \qquad \,\,\text{on}\quad \text{$\Sigma_{T,8}$},\\
	&\mathbf{0} \qquad\quad  \text{on}\quad \text{$\Sigma_{7}\cup \Sigma_{T,2}$},
\end{cases}
\end{split}
\end{equation*}
\begin{figure}
	\begin{center}
		\includegraphics[width=.9\linewidth]{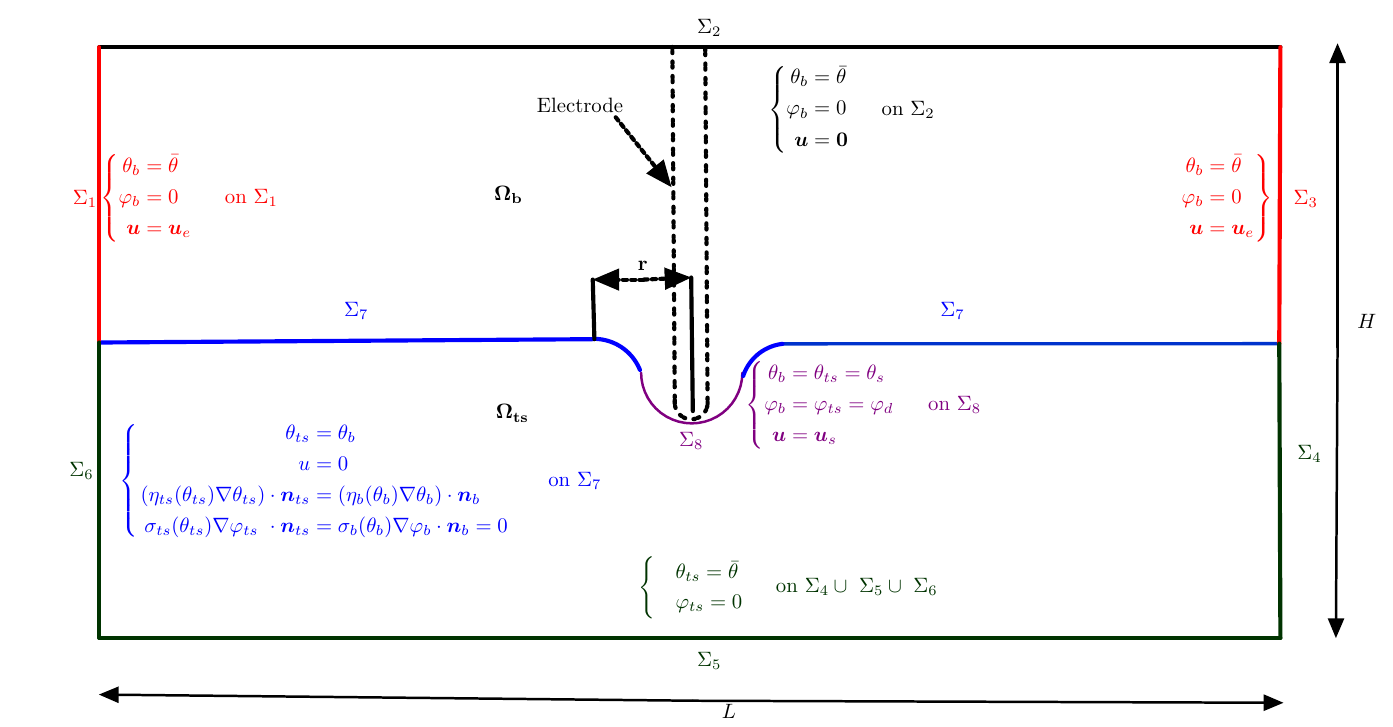}
	\end{center}
	\caption{Configuration geometry and boundaries conditions of the model.}
	\label{Domain}
\end{figure} 
Regarding the blood boundary conditions, 
we impose inlet and outlet velocity boundary conditions $\boldsymbol{u}=\boldsymbol{u}_e$ on the left and right surfaces of the blood volume ($\Sigma_{T,1}\cup \Sigma_{T,3}$). We assume that the saline irrigation flow ($\boldsymbol{u}=\boldsymbol{u}_s$ on $\Sigma_{T,8}$) serves as a velocity boundary condition applied to the area of the electrode tip. On the remaining boundary of the blood volume $\Sigma_b$ ($\Sigma_{7}$), we apply a no-slip condition ($\boldsymbol{u}=0$). 
For the thermal boundary conditions, a constant temperature $\bar{\theta} = 37^\circ C$ is set on $\Sigma^D_{T,b}\setminus(\Sigma_{T,7}\cup \Sigma_{T,8})$ for $\theta_b$ {and temperature of the saline flow is denoted by $\theta_s$ ($\theta_{ts}=\theta_b=\theta_s$ on  $\Sigma_{T,8}$)}. On the contact surface ($\Sigma_{T,7}$) between the two media (blood and tissue), we ensure the continuity of the flux and temperature. Similarly, a constant temperature of $\theta_{ts} = 37^\circ C$ is maintained on the remaining boundary surface of the tissue ($\Sigma_{T,4}\cup \Sigma_{T,5} \cup \Sigma_{T,6}$).
In the electrical model, we enforce a zero flux boundary condition on the tissue-blood surface $\Sigma^N_b$, except on the surface of the conducting part of the RF probe ($\Sigma_{T,8}$), where $\varphi_i=\varphi_d$ for $i=b,ts$. On the remaining boundary surface of the tissue and blood, we set $\varphi_i=0$ for $i=b,ts$.

\subsection{Contributions and related works}

The mathematical study of thermistor models, particularly the heat-potential model, has been the subject of several works. For instance, in~\cite{Allegretto1992}, the existence of a solution is established based on the maximum principle and a fixed-point argument. In~\cite{ref2}, the existence of a weak solution over an arbitrarily large time interval is demonstrated using the Faedo-Galerkin method. In~\cite{Li_Yang2015}, the existence and uniqueness of the solution for thermistor equations were addressed without relying on non-generalized assumptions on the data. Additionally, in the case of constant thermal conductivities, the existence and uniqueness of the solution in three-dimensional space are shown, with a particular emphasis on its $\alpha$-H\"{o}lder continuity, as seen in~\cite{Yuan1994}.
From a computational perspective, there is currently a lack of numerical analyses covering various scenarios of the model, including variations in conductivity and spatial considerations. In the context of two-dimensional space, where constant thermal conductivity is assumed, significant results were developed in \cite{Akrivis2005}. In the latter, by using a mixed finite element method with a linearized semi-implicit Euler scheme, an optimal error estimate for the $L^2$ norm is derived while adhering to specific time-step conditions.
Expanding this computational approach to higher-dimensional spaces, particularly in two and three dimensions (see \cite{Li2012}), time-step conditions for linearized semi-implicit methods are developed. Furthermore, optimal error estimates have been presented for a Crank-Nicolson Galerkin method concerning nonlinear thermistor equations~\cite{Li2014}, as well as for similar schemes based on backward differential formulas~\cite{Gao2015}.
In order to enhance our understanding of the practical application of thermistor equations in various scenarios and dimensions, developing models that couple the thermistor system with Navier-Stokes systems presents a challenge. These results can be particularly relevant for addressing fluid dynamics during ablation, as evident in blood flow~\cite{fang2022radiofrequency, materiel1, linte2018lesion,   petras2019computational,  sanchez2022computer, vaidya2021simulation, zhang2016review}. Some mathematical and mechanical studies of blood flow dynamics have been the subject of research,  such as \cite{FQV09, QMV17}.

In this paper, our primary focus is on the mathematical investigation and numerical approximation of a Thermo-Electric-Flow model. What distinguishes this model is its method of partitioning the domain into two distinct regions, representing the blood vessel and cardiac tissue, as visually depicted in Figure \ref{Domain}.
We undertake a rigorous analysis of the electro-thermo-fluid model in three dimensions, introducing an equivalent variational formulation. Our choice of temperature-dependent electrical and thermal conductivities ensures that they satisfy the necessary conditions for the existence of solutions. By employing the Faedo-Galerkin approach and  using compactness arguments, we are able to establish the existence of solutions for this model.
However, it's important to note that the practical implementation of our model poses certain challenges. 
Specifically, the variational formulations of the heat equation encounter complications due to the quadratic terms involving electric fields in \eqref{S1}, denoted as $(\sigma_i(\theta_i)|\nabla \varphi_i|^{2}$), where $i=b,ts$. Since we are seeking $\varphi_i$ in the space $H^1(\Omega_i)$, it follows that $\sigma_i(\theta_i)|\nabla \varphi_i|^{2}$ is limited to $L^{1}(\Omega_i)$. Several approaches can be employed to address this issue, including the selection of test functions in the variational formulations of temperatures within the space $H^{1}(\Omega_i)\cap L^{\infty}(\Omega_i)$. For further details on this approach, please refer to~\cite{bernardi2018finite}. However, in our case, we opt for the method proposed in~\cite{antontsev1994thermistor} by designating the boundary condition $\varphi_d$ within the space $H^{1}(\Omega_i)\cap L^{\infty}(\Omega_i)$.
The variational formulation is subsequently discretized using finite element schemes in both domains, and we utilize the domain decomposition method, particularly the Schwarz algorithm, to ensure the continuity of heat and potential equations. Temporal discretization is implemented through an Euler scheme. Finally, we present the results of numerical experiments conducted for various scenarios.

\subsection{Organization of the paper}
The structure of this paper is as follows. In the next section, we will introduce fundamental notations and appropriate functional spaces. Then, we will formulate the problem within a variational framework and present the existence result for the proposed model. Section \ref{proof-thm1} is dedicated to the proof of this result. We will also outline a two-dimensional numerical approach, which will be illustrated in Section \ref{numerical} through various numerical experiments involving different parameter values. Lastly, Section \ref{conclusion} will consolidate our findings and provide insights into future perspectives.

\section{Notion of solution and main result}      

\subsection{Mathematical setting}
Before presenting our results regarding weak soluions, we will first provide some preliminary materials, including relevant notations and conditions imposed on the data. \\ 
Let $\Omega=\Omega_{b}\cup \Omega_{ts}$ be a bounded open subset of $\mathbb{R}^N$, where $N=3$, such that $\Omega_{b} \cap\Omega_{ts}=\varnothing$. The boundary of $\Omega$ is smooth and can be denoted as $\partial \Omega=\Sigma_{b}\cup \Sigma_{ts}$, with $\Sigma_{b} \cap\Sigma_{ts}=\Sigma_7\cup \Sigma_8$ (see Figure \ref{Domain}). The symbol $|\Omega|$ represents the Lebesgue measure of $\Omega$. 
For $i=ts, b$, we denote $H^1(\Omega_i)$ as the Sobolev space of functions $\psi:\Omega_i \to \mathbb{R}$, where $\psi\in L^2(\Omega_i)$ and $ \nabla \psi \in L^2(\Omega_i ;\mathbb{R}^N)$. 
The notation $\parallel \cdot \parallel_{L^p(\Omega_i)}$ represents the standard norm in $L^p(\Omega_i)$, with $1\leq p \leq +\infty$: 
$$
L^{p}(\Omega_i)=\{u:\Omega \longrightarrow  {{\mathbb{R}}}\;\mbox{measurable} 
\mbox{~and~} \int_{\Omega_i} |u(x)|^{p}dx<+\infty\},
$$
$$
L^{\infty}(\Omega_i)=\{u:\Omega_i \longrightarrow
{{\mathbb{R}}}\;\mbox{measurable}
\mbox{~and~} \sup_{x \in \Omega_i}|u(x)|<+\infty\}.
$$ 
If $X$ is a Banach space, $a < b$, and $1 \leq p \leq +\infty$, the notation $L^p(a,b;X)$ represents the space of all measurable functions $\psi: (a,b) \to X$ such that $\parallel \psi(\cdot) \parallel_X$ belongs to $L^p(a,b)$. Note that $C^c_0(0, T; X)$ denotes the space of continuous functions with compact support and values in $X$.
\\
To simplify mathematical formulations, we introduce the following notations, assuming that the functions $\boldsymbol{v}$, $\boldsymbol{w}$, $\boldsymbol{\psi}$, $\theta_i$, $\phi_i$, $\varphi_i$, $\chi_i$, and $S_i$ for $i = b, ts$ are sufficiently smooth so that the following integrals are well-defined:
\begin{align*} 
	(\boldsymbol{w}, \boldsymbol{\psi})&=\int_{\Omega_b} \boldsymbol{w} \cdot \boldsymbol{\psi} dx 
	, \qquad
	&   (\theta_i, S_i)_{\Gamma_i}&=\int_{\Gamma_i}   \theta_i \, S_i \, \mathrm{d} \zeta,
	\\
	\tilde{a}_{\boldsymbol{w}}(\boldsymbol{w}, \boldsymbol{\psi})&=\int_{\Omega_b} \mathbb{D}(\boldsymbol{w}): \nabla\boldsymbol{\psi} dx
	, \qquad	
	& 	d(\boldsymbol{w}, \theta_i, S_i)&=\int_{\Omega_i} (\boldsymbol{w} \cdot \nabla \theta_i) S_i dx,
	\\
	b(\boldsymbol{w}, \boldsymbol{v}, \boldsymbol{\psi})&=\int_{\Omega_b}(\boldsymbol{w}.\nabla ) \boldsymbol{v}\cdot\boldsymbol{\psi} dx 
	, \qquad
	& 	a_{\theta_i}(\phi_i ; \theta_i, S_i)&=\int_{\Omega_i} \eta_i(\phi_i) \nabla \theta_i \cdot \nabla S_i dx ,
	\\
	\tilde{a}_{\theta_i}(\theta_i, S_i )&=\int_{\Omega_i} \nabla \theta_i \cdot \nabla S_i  dx 
	, \qquad
	& 	c_{\varphi_i}(\phi_i ,\varphi_i,S_i )&=\int_{\Omega_i} \sigma_i(\phi_i) \nabla \varphi_i \cdot \nabla \varphi_i S_i dx,
	\\ 
	a_{\varphi_i}(\phi_i ,\varphi_i,\chi_i )&=\int_{\Omega_i} \sigma_i(\phi_i) \nabla \varphi_i \cdot \nabla \chi_i dx
	, \qquad
	& a_{\boldsymbol{w}}(\phi ; \boldsymbol{\boldsymbol{w}}, \boldsymbol{\psi})&=\int_{\Omega_b} \nu(\phi) \mathbb{D}(\boldsymbol{w}):\nabla\boldsymbol{\psi} dx .
\end{align*} 
Furthermore, in the analysis of our RF-ablation model \eqref{S1}, we will utilize the following vectorial spaces:
\begin{equation*}
	\begin{split}
		&\mathbf{L}^2(\Omega_b)=(L^2(\Omega_b))^3, \qquad \mathbf{H}^1(\Omega_b)=(H^1(\Omega_b))^3, \mathbf{H}_{0}^{1}(\Omega_b)=\left(H_{0}^{1}(\Omega_b)\right)^3,\\
		&\boldsymbol{\mathcal{H}}^{\boldsymbol{u}}_{\boldsymbol{0}}= \left\{\boldsymbol{u}\in  \mathbf{H}^1(\Omega_b), \;  \operatorname{div} \boldsymbol{u}=0\, \text{ and }\,\boldsymbol{u}=\boldsymbol{0} \,\,  \hbox { on } \Sigma_{b} \right\} ,\quad \mathcal{L}_{\boldsymbol{b}}= \overline{\boldsymbol {\mathcal{H}}^{\boldsymbol{u}}_{\boldsymbol{0}}}^{\mathbf{L}^2(\Omega_b)},\\
		&\mathcal H^{\theta_{i}}_{0}=\left\{\theta_i\in  H^1(\Omega_i), \;  \theta_i=0 \,\,  \hbox { on } \Sigma^D_{T,i}\right\}, \hbox{ for  } i=b,ts\\
		&\mathcal H^{\varphi_{i}}_{0}=\left\{\varphi_i\in  H^1(\Omega_i), \;  \varphi_i=0\ \,\,  \hbox { on }\Sigma_{T,i}{\small \setminus} \Sigma_{T,7} \right\} , \hbox{ for  } i=b,ts.
	\end{split}
\end{equation*}
The properties of the forms $b(\cdot, \cdot, \cdot)$ are described in the following lemma, where $C_1$ is a positive constant (dependent on the domain) that may vary between different lines.

\begin{lemma}[ Properties of the trilinear form \cite{boyer2012mathematical}] The trilinear form b is continuous on $\mathbf{H}_{0}^{1}(\Omega) \times\mathbf{H}_{0}^{1}(\Omega) \times\mathbf{H}_{0}^{1}(\Omega)$ and satisfies:
	$$   
	\begin{aligned}
		b(\boldsymbol{w}, \boldsymbol{v}, \boldsymbol{\psi})+b(\boldsymbol{w}, \boldsymbol{\psi}, \boldsymbol{v}) &=0, \forall \boldsymbol{w}, \boldsymbol{v}, \boldsymbol{\psi} \in \boldsymbol {\mathcal{H}}^{\boldsymbol{u}}_{\boldsymbol{0}}, \\
		b(\boldsymbol{w}, \boldsymbol{v}, \boldsymbol{v}) &=0, \forall \boldsymbol{w}, \boldsymbol{v} \in \boldsymbol {\mathcal{H}}^{\boldsymbol{u}}_{\boldsymbol{0}}.
	\end{aligned} 
	$$
	Moreover, for all $\boldsymbol{w}, \boldsymbol{v}, \boldsymbol{\psi} \in \boldsymbol {\mathcal{H}}^{\boldsymbol{u}}_{\boldsymbol{0}},$ we have
	$$
	|b(\boldsymbol{w}, \boldsymbol{v}, \boldsymbol{\psi})| \leq C_1\|\boldsymbol{w}\|_{L^{2}}^{1/ 4}\|\boldsymbol{w}\|_{H^{1}}^{3 / 4}\|\boldsymbol{v}\|_{L^{2}}^{1/ 4}\|\boldsymbol{v}\|_{H^{1}}^{3 / 4}\|\boldsymbol{\psi}\|_{H^{1}}.
	$$
\label{pro-trilinear}
\end{lemma}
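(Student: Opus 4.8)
The plan is to dispatch the two algebraic identities first and then read the interpolation estimate off the skew-symmetry, since that is the cleanest route to the exact form requested. I would begin with $b(\boldsymbol{w},\boldsymbol{v},\boldsymbol{\psi})+b(\boldsymbol{w},\boldsymbol{\psi},\boldsymbol{v})=0$, working first on the dense subset of smooth, compactly supported, divergence-free fields, for which every integral is classical. Writing $b(\boldsymbol{w},\boldsymbol{v},\boldsymbol{\psi})=\sum_{i,j}\int_{\Omega_b} w_i\,(\partial_i v_j)\,\psi_j\,dx$ and integrating by parts in the variable $x_i$, the boundary contribution carries the factor $w_i n_i$ and therefore vanishes because $\boldsymbol{w}=\boldsymbol{0}$ on $\Sigma_b$; what remains is $-\sum_{i,j}\int_{\Omega_b}\partial_i(w_i\psi_j)\,v_j\,dx$. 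Expanding the derivative and using $\operatorname{div}\boldsymbol{w}=\sum_i\partial_i w_i=0$ kills the term in which the derivative falls on $\boldsymbol{w}$, leaving precisely $-b(\boldsymbol{w},\boldsymbol{\psi},\boldsymbol{v})$. The identity then extends to all of $\boldsymbol{\mathcal{H}}^{\boldsymbol{u}}_{\boldsymbol{0}}$ by continuity of $b$ and density of smooth divergence-free fields. Setting $\boldsymbol{\psi}=\boldsymbol{v}$ gives $2b(\boldsymbol{w},\boldsymbol{v},\boldsymbol{v})=0$, hence the second identity.

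For the continuity estimate I would first record the embedding $H^1(\Omega_b)\hookrightarrow L^4(\Omega_b)$, valid in dimension $N=3$, which already yields continuity of $b$ on $\mathbf{H}_0^1(\Omega_b)$ through the H\"older split with exponents $(4,2,4)$, since $\tfrac14+\tfrac12+\tfrac14=1$. To reach the sharper bound in the stated form, I use the skew-symmetry to transfer the derivative onto $\boldsymbol{\psi}$, writing $b(\boldsymbol{w},\boldsymbol{v},\boldsymbol{\psi})=-b(\boldsymbol{w},\boldsymbol{\psi},\boldsymbol{v})$, and then H\"older with the same exponents gives $|b(\boldsymbol{w},\boldsymbol{\psi},\boldsymbol{v})|\le\|\boldsymbol{w}\|_{L^4}\,\|\nabla\boldsymbol{\psi}\|_{L^2}\,\|\boldsymbol{v}\|_{L^4}$. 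Applying the three-dimensional Gagliardo--Nirenberg (Ladyzhenskaya) inequality $\|\boldsymbol{z}\|_{L^4}\le C\,\|\boldsymbol{z}\|_{L^2}^{1/4}\|\boldsymbol{z}\|_{H^1}^{3/4}$ to the factors $\boldsymbol{w}$ and $\boldsymbol{v}$, and bounding $\|\nabla\boldsymbol{\psi}\|_{L^2}\le\|\boldsymbol{\psi}\|_{H^1}$, I recover exactly the asserted inequality, with all constants absorbed into $C_1$.

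The routine ingredients here, namely H\"older, the Sobolev embedding, and the interpolation inequality, pose no difficulty. The one point I would handle with care is the passage from smooth fields to general elements of $\boldsymbol{\mathcal{H}}^{\boldsymbol{u}}_{\boldsymbol{0}}$: the vanishing of the boundary term must be justified for $H^1$ fields via the trace theorem and the density of compactly supported divergence-free fields, rather than by assuming the integration by parts is valid verbatim. As this is a classical fact, I would invoke the density theorem directly from the cited reference instead of reproving it.
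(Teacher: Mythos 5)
Your proof is correct: the integration by parts plus $\operatorname{div}\boldsymbol{w}=0$ gives the skew-symmetry, and transferring the derivative onto $\boldsymbol{\psi}$ before applying H\"older with exponents $(4,2,4)$ and the three-dimensional Gagliardo--Nirenberg inequality $\|\boldsymbol{z}\|_{L^4}\le C\|\boldsymbol{z}\|_{L^2}^{1/4}\|\boldsymbol{z}\|_{H^1}^{3/4}$ yields exactly the stated bound. The paper itself gives no proof of this lemma, only the citation to Boyer--Fabrie, and your argument is precisely the standard one found there, including the care you take with the density of smooth divergence-free fields.
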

For all $\boldsymbol{w}, \boldsymbol{v} \in \boldsymbol {\mathcal{H}}^{\boldsymbol{u}}_{\boldsymbol{0}}$ we denote as $B(\boldsymbol{w}, \boldsymbol{v}) \in (\boldsymbol {\mathcal{H}}^{\boldsymbol{u}}_{\boldsymbol{0}})^{\prime}$ the bilinear form on $\boldsymbol {\mathcal{H}}^{\boldsymbol{u}}_{\boldsymbol{0}}\times\boldsymbol {\mathcal{H}}^{\boldsymbol{u}}_{\boldsymbol{0}}$ defined by
$$
\langle B(\boldsymbol{w}, \boldsymbol{v}), \boldsymbol{\psi}\rangle_{(\boldsymbol {\mathcal{H}}^{\boldsymbol{u}}_{\boldsymbol{0}})^{\prime}, \boldsymbol {\mathcal{H}}^{\boldsymbol{u}}_{\boldsymbol{0}}}=b(\boldsymbol{w}, \boldsymbol{v}, \boldsymbol{\psi}) .
$$
Then, by Lemma \ref{pro-trilinear}, it is clear that the map $B$ is continuous from $\boldsymbol {\mathcal{H}}^{\boldsymbol{u}}_{\boldsymbol{0}} \times \boldsymbol {\mathcal{H}}^{\boldsymbol{u}}_{\boldsymbol{0}}$ into $(\boldsymbol {\mathcal{H}}^{\boldsymbol{u}}_{\boldsymbol{0}})^{\prime}$ and that we have
\begin{equation}\label{bounded-trilniear}
	\|B(\boldsymbol{w}, \boldsymbol{w})\|_{(\boldsymbol {\mathcal{H}}^{\boldsymbol{u}}_{\boldsymbol{0}})^{\prime}} \leq C_1\|\boldsymbol{w}\|_{L^{2}}^{1/ 2}\|\boldsymbol{w}\|_{H^{1}}^{3 / 2},\quad \forall \boldsymbol{w} \in \boldsymbol {\mathcal{H}}^{\boldsymbol{u}}_{\boldsymbol{0}}.
\end{equation} 


\subsection{Variational formulation}     
Before presenting our main result, we make several assumptions about the parameters and data of the model \eqref{S1}-\eqref{S3}. Without further references, we assume the following assumptions:\\
Firstly, we assume that the functions $\mu$, $\sigma_i$, and $\eta_i$ are positive, bounded, and continuous with respect to the temperature
\begin{eqnarray}\label{assump-1}
	\begin{split}
		0<\underline{\nu} \leq \nu(\xi) \leq \overline{\nu}<+\infty \qquad & \forall \xi \in \mathbb{R},\\	
		0<\underline{\sigma}_{i} \leq \sigma_i(\xi) \leq \overline{\sigma}_{i}<+\infty \qquad & \forall \xi \in \mathbb{R} \,\text{ and } i=b,ts \\
		0<\underline{\eta}_{i} \leq \eta_i(\xi) \leq \overline{\eta}_{i}<+\infty \qquad &  \forall \xi \in \mathbb{R} \,\text{ and } i=b,ts 
	\end{split} 
\end{eqnarray}
where 
$\underline{\nu}$, $\overline{\nu}$, $\underline{\sigma}_{i}$, $\overline{\sigma}_{i}$, $\underline{\eta}_{i}$ and  $\overline{\eta}_{i}$  are positive constants. \\
In addition, with respect to the boundary conditions of \eqref{S2}, we use the notations ${\boldsymbol{u}}_d$, ${\theta}_d$, and ${\varphi}_{i,d}$, and we also denote their trace extensions by the same notations. Then, we assume the following conditions:
\begin{equation}\label{assump-2} 
	\begin{split}
		&{\boldsymbol{u}}_d \in L^4(0,T;\mathbf{H}^1(\Omega_b)), \, \partial_t{\boldsymbol{u}}_d\in L^2(0,T;(\mathbf{H}^1(\Omega_b))'),\quad{\text{such that } \operatorname{div} \boldsymbol{u}_d=0\, \,\text{ in }\,\Omega}\\
		& {\theta}_d \in L^2(0,T;H^1(\Omega_b)\cup H^1(\Omega_{ts})),\, \partial_t{\theta}_d \in L^2(0,T;(H^1(\Omega_b))'\cup (H^1(\Omega_{ts}))')\\
		&{\varphi}_{i,d} \in L^\infty(0,T;H^1(\Omega_i)\cap L^\infty(\Omega_i)),\qquad\forall i=b,ts.
\end{split}\end{equation}
The initial conditions of the temperatures and velocity on the boundary, ${\theta}_{d,0}:={\theta}_d(0,.)$ and ${\boldsymbol{u}}_{d,0}:={\boldsymbol{u}}_d(0,.)$, belong to ${L}^2(\Omega_b)\cup{L}^2(\Omega_{ts})$ and $\mathbf{L}^2(\Omega_b)$, respectively.
\\
Finally, the assumptions for the rest of the data are as follows:
\begin{equation}\label{assump-3}
	\boldsymbol{u}_{0}\in \mathbf{L}^2(\Omega_b),\quad \theta_{i,0}\in L^2(\Omega_i)\text{ for $i=b,ts$ }\quad \text{and}\quad 
	\boldsymbol{F}\in L^2(\Omega_{T,b}).
\end{equation}

Now we define what we mean by a weak solution of our system \eqref{S1}-\eqref{S3}. We also supply our main existing result.
\begin{definition}\label{defSol} We say that  $(\boldsymbol{u},\theta_b,\theta_{ts},\varphi_{b},\varphi_{ts})$ is a weak solution to System (\ref{S1}), (\ref{S2}) and (\ref{S3}), if 
	{
		\begin{equation*}
			\begin{split}
				& \boldsymbol{u}-\boldsymbol{u}_d \in L^\infty(0,T; \mathcal{L}_{\boldsymbol{b}})\cap L^2(0,T; \boldsymbol {\mathcal{H}}^{\boldsymbol{u}}_{\boldsymbol{0}}),\;\;\partial_t \boldsymbol{u} \in L^1(0,T;(\boldsymbol {\mathcal{H}}^{\boldsymbol{u}}_{\boldsymbol{0}})^\prime),\\
				&\theta_i-\theta_{d} \in L^\infty(0,T; L^2(\Omega_i))\cap L^2(0,T; \mathcal H^{\theta_{i}}_{0}),\;\;\partial_t \theta_i\in L^1(0,T;(\mathcal H^{\theta_{i}}_{0})^\prime), \quad \text{for } i=b,ts\\
				&\varphi_i-\varphi_{i,d} \in L^2(0,T; \mathcal H^{\varphi_{i}}_{0})\cap L^\infty(0,T;  L^\infty(\Omega_i))
				,\quad \text{for } i=b,ts
			\end{split}
		\end{equation*}
	} 
	and the following identities hold
	\begin{eqnarray}\label{model*}
		\begin{split}
			\int_{0}^{T}	\left\langle\partial_t\boldsymbol{u}, \psi\right\rangle\, dt+\int_{0}^{T}a_{u}(\theta_b ; \boldsymbol{u}, \boldsymbol{\psi})\, dt+\int_{0}^{T}b(\boldsymbol{u}, \boldsymbol{u}, \boldsymbol{\psi})\, dt
			-\int_{0}^{T}(\boldsymbol{F}, \boldsymbol{\psi})\, dt&=0,
			\\  
			\sum_{i=b,ts}\int_{0}^{T}\left\langle\partial_t\theta_{i}, S_{i} \right\rangle\, dt+\sum_{i=b,ts}\int_{0}^{T}a_{\theta_{i}}(\theta_{i} ; \theta_{i}, S_{i} )\, dt+\int_{0}^{T}d(\boldsymbol{u}, \theta_b, S_{b})\, dt
			-\sum_{i=b,ts}\int_{0}^{T}c_{\varphi_{i}}(\theta_{i} ,\varphi_{i},S_{i})\, dt 
			&=0,
			\\ 	  		
			\sum_{i=b,ts}	\int_{0}^{T}a_{\varphi_{i}}(\theta_{i} ; \varphi_{i}, \phi_i )&=0,
		\end{split}
	\end{eqnarray}   
	{
		for all test functions $\boldsymbol{\psi} \in D([0,T); \boldsymbol {\mathcal{H}}^{\boldsymbol{u}}_{\boldsymbol{0}})$, $S_i \in D([0,T);  {\mathcal{H}}^{\theta_i}_{\boldsymbol{0}})$ 
		and $\phi_i\in L^2(0,T; \mathcal H^{\varphi_{i}}_{0})$,
		for $i=b,ts$, with $S_{b}=S_{ts}$ on $\Sigma_{T,7}\cup \Sigma_{T,8}$ and
	}
	\begin{eqnarray}
		\boldsymbol{u}(0,\boldsymbol{x}) &=\boldsymbol{u}_{0}(\boldsymbol{x})  & \hbox { in }  \Omega\label{eqvar4}, \\
		\theta_i(0,\boldsymbol{x}) &=\theta_{i,0}(\boldsymbol{x})  & \hbox { in }  \Omega, \quad \text{ for } i=b,ts. \label{eqvar5}	\end{eqnarray}
	
\end{definition}

It is significant to highlight that variational formulations of the heat equations are not as straightforward as those of the Navier-Stokes system and the elliptic equations. While there is no difficulty in writing the convection term $d(\boldsymbol{u},\theta_b,\cdot)=\boldsymbol{u}.\nabla\theta_b$ in a variational sense, this term can be tested by a function in $\mathcal{H}^{\theta_b}_0\subset H^1\hookrightarrow L^{4}(\Omega_b)$. Since we seek the temperature $\theta_b$ in $\mathcal{H}^{\theta_i}_0$ and the velocity in ${\boldsymbol{\mathcal H}}^{\boldsymbol{u}}_{\boldsymbol{0}}\subset \boldsymbol{H}^1\hookrightarrow \boldsymbol{L}^{4}(\Omega_b)$.
However, the quadratic terms $\sigma_i(\theta_i)|\nabla \varphi_i|^{2}$, $i=b,ts$, cannot be tested by functions in $\mathcal{H}^{\theta_i}_0$ since we are looking for the potentials $\varphi_i$ in $\mathcal H^{\varphi_{i}}_{0}$, as this term belongs to $L^{1}(\Omega_i)$. There are different methods to choose suitable function spaces. 
For example, we can take the test functions in the space $\mathcal{H}^{\theta_i}_0\cap L^{\infty}(\Omega_i)$~\cite{bernardi2018finite, salhi2022well}. Alternatively, if we set the boundary condition of the potential $\varphi_{d}$ in the space $H^{1/2}(\Sigma_i)\cap L^{\infty}(\Sigma_i)$, we can choose the test functions for the heat equations in $\mathcal{H}^{\theta_i}_0$. A discussion of this approach can be found in~\cite{antontsev1994thermistor}.

\begin{theorem}\label{theo1}
	Assume conditions \eqref{assump-1}, \eqref{assump-2} and \eqref{assump-3}  hold. Then the RF-ablation model 
	(\ref{S1}), (\ref{S2}) and (\ref{S3}) possesses a weak solution in the sense of Definition \ref{defSol}.
\end{theorem}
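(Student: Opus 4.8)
The plan is to construct a weak solution by the Faedo--Galerkin method together with a compactness argument, in the spirit of the coupled thermistor/Navier--Stokes analyses cited above. First I would homogenize the boundary data by writing $\boldsymbol{u}=\tilde{\boldsymbol{u}}+\boldsymbol{u}_d$, $\theta_i=\tilde\theta_i+\theta_d$ and $\varphi_i=\tilde\varphi_i+\varphi_{i,d}$, so that the new unknowns lie in $\boldsymbol {\mathcal{H}}^{\boldsymbol{u}}_{\boldsymbol{0}}$, $\mathcal H^{\theta_{i}}_{0}$ and $\mathcal H^{\varphi_{i}}_{0}$ with homogeneous traces; the regularity assumed in \eqref{assump-2} is exactly what is needed for the resulting lifted source terms. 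The decisive preliminary observation concerns the Joule term: since $\operatorname{div}(\sigma_i(\theta_i)\nabla\varphi_i)=0$, one has the identity $\sigma_i(\theta_i)|\nabla\varphi_i|^2=\operatorname{div}\bigl(\sigma_i(\theta_i)\varphi_i\nabla\varphi_i\bigr)$, so that against a test function $S_i$ this term becomes $-\int_{\Omega_i}\sigma_i(\theta_i)\varphi_i\nabla\varphi_i\cdot\nabla S_i$ up to boundary contributions. Because the choice $\varphi_{i,d}\in L^\infty$ in \eqref{assump-2} together with the maximum principle forces $\varphi_i\in L^\infty$, the field $\sigma_i(\theta_i)\varphi_i\nabla\varphi_i$ lies in $L^2$, and the Joule term becomes meaningful for every $S_i\in\mathcal H^{\theta_{i}}_{0}$; this is the device of \cite{antontsev1994thermistor}.

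Next I would set up the finite-dimensional scheme. For the velocity and the two temperatures I select Galerkin bases of $\boldsymbol {\mathcal{H}}^{\boldsymbol{u}}_{\boldsymbol{0}}$ and of the coupled temperature space, the latter chosen to respect the matching condition $S_b=S_{ts}$ on $\Sigma_{T,7}\cup\Sigma_{T,8}$, and seek $\boldsymbol{u}_n,\theta_{i,n}$ as combinations of the first $n$ modes plus the lifts. The potentials carry no time derivative, so for each $t$ I solve the elliptic problems $a_{\varphi_i}(\theta_{i,n};\varphi_{i,n},\chi_i)=0$ for all $\chi_i\in\mathcal H^{\varphi_{i}}_{0}$ by Lax--Milgram, using coercivity from $\sigma_i\ge\underline\sigma_i>0$ in \eqref{assump-1}; this yields $\varphi_{i,n}$ as a function of $\theta_{i,n}$ depending continuously on it, since $\sigma_i$ is continuous and bounded. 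Substituting into the momentum and heat equations produces a Carath\'eodory system of ODEs for the time coefficients with continuous right-hand side; Peano's theorem gives a local solution, and the a priori bounds below extend it to $[0,T]$.

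The a priori estimates come from testing each equation with its own lifted unknown. Testing the potential equation with $\varphi_{i,n}-\varphi_{i,d}$ and using \eqref{assump-1} bounds $\varphi_{i,n}$ uniformly in $L^\infty(0,T;H^1(\Omega_i))$, while the maximum principle bounds it in $L^\infty$. Testing the momentum equation with $\boldsymbol{u}_n-\boldsymbol{u}_d$, invoking $\nu\ge\underline\nu$, Korn's inequality, and the antisymmetry $b(\boldsymbol{w},\boldsymbol{v},\boldsymbol{v})=0$ of Lemma \ref{pro-trilinear}, bounds $\boldsymbol{u}_n$ in $L^\infty(0,T;\mathbf{L}^2(\Omega_b))\cap L^2(0,T;\mathbf{H}^1(\Omega_b))$. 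Testing the heat equations with $\theta_{i,n}-\theta_d$, using $\eta_i\ge\underline\eta_i$, the (near-)antisymmetry of $d(\boldsymbol{u}_n,\cdot,\cdot)$ afforded by $\operatorname{div}\boldsymbol{u}_n=0$, and the reformulated Joule term bounded by $\overline\sigma_i\|\varphi_{i,n}\|_{L^\infty}\|\nabla\varphi_{i,n}\|_{L^2}\|\nabla\tilde\theta_{i,n}\|_{L^2}$ and absorbed via Young's inequality, bounds $\theta_{i,n}$ in $L^\infty(0,T;L^2(\Omega_i))\cap L^2(0,T;\mathcal H^{\theta_{i}}_{0})$. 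Comparison in the equations then bounds $\partial_t\boldsymbol{u}_n$ and $\partial_t\theta_{i,n}$ in suitable dual spaces, so that by Banach--Alaoglu and the Aubin--Lions--Simon lemma I extract a subsequence with $\boldsymbol{u}_n\to\boldsymbol{u}$ and $\theta_{i,n}\to\theta_i$ strongly in $L^2(0,T;L^2)$ and a.e.

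Finally I pass to the limit. Strong a.e. convergence of $\theta_{i,n}$ with the continuity and boundedness of $\nu,\sigma_i,\eta_i$ gives, by dominated convergence, $\sigma_i(\theta_{i,n})\to\sigma_i(\theta_i)$ (and likewise for $\eta_i,\nu$) strongly in every $L^q$, $q<\infty$; combined with weak convergence of the gradients this handles the diffusion, convection and elliptic terms. For the potentials I first pass to the limit in the elliptic identity by a weak--strong product argument to see that $\varphi_i$ solves the limit equation, and then deduce \emph{strong} convergence of $\nabla\varphi_{i,n}$ in $L^2$ from the energy identity $\int_{\Omega_i}\sigma_i(\theta_{i,n})|\nabla\varphi_{i,n}|^2=\int_{\Omega_i}\sigma_i(\theta_{i,n})\nabla\varphi_{i,n}\cdot\nabla\varphi_{i,d}$, whose right side converges to the corresponding limit; this is exactly what allows passage to the limit in the reformulated Joule term $\int_{\Omega_i}\sigma_i(\theta_{i,n})\varphi_{i,n}\nabla\varphi_{i,n}\cdot\nabla S_i$. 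Recovering the initial data from the time-derivative bounds then identifies $(\boldsymbol{u},\theta_b,\theta_{ts},\varphi_b,\varphi_{ts})$ as a weak solution in the sense of Definition \ref{defSol}. I expect the main obstacle to be precisely this Joule term: securing the $L^\infty$ bound on $\varphi_i$ and the strong $L^2$ convergence of $\nabla\varphi_{i,n}$ that together tame a quantity only a priori in $L^1(\Omega_i)$, with the interface coupling $S_b=S_{ts}$ on $\Sigma_{T,7}\cup\Sigma_{T,8}$ a secondary technical point to carry through the basis construction.
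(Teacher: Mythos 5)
Your proposal is correct and follows essentially the same route as the paper: homogenization of the boundary data, a Faedo--Galerkin scheme, the Antontsev--Chipot device of rewriting the Joule term as $-\int_{\Omega_i}\sigma_i(\theta_i)\varphi_i\nabla\varphi_i\cdot\nabla S_i$ combined with the maximum-principle bound $\varphi_i\in L^\infty$, energy estimates obtained by testing with the lifted unknowns, and Aubin--Lions--Simon plus dominated convergence for the limit. Two localized choices differ. First, at the Galerkin level the paper does not eliminate the potentials by Lax--Milgram; it instead adds a vanishing regularization $\varepsilon_n\sum_i(\partial_t\varphi_i^n,\beta_{i,\ell})$ with $\varepsilon_n=1/n$ to turn the full system into a well-posed ODE problem --- your reduction is cleaner but obliges you to verify that the map $\theta_{i,n}\mapsto\varphi_{i,n}$ is Carath\'eodory, which you correctly flag. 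Second, for the limit in the Joule term the paper gets by with only \emph{weak} $L^2$ convergence of $\nabla\varphi_i^n$: in the decomposition of $c_{\varphi}(\theta_i^n,\varphi_i^n,S_i)-c_{\varphi}(\theta_i,\varphi_i,S_i)$ each factor multiplying $\nabla\varphi_i^n$ converges strongly in $L^2$ (by the uniform $L^\infty$ bound, the strong $L^2$/a.e.\ convergence of $\varphi_i^n$ and $\theta_i^n$, and dominated convergence), so the weak--strong pairing suffices; your extra step of deducing strong $L^2$ convergence of $\nabla\varphi_{i,n}$ from the energy identity is valid and slightly more robust, but not needed. Your sketch omits the recovery of the pressure via de Rham's theorem, which the paper includes, though the pressure does not appear in Definition \ref{defSol}.
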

The proof of Theorem \ref{theo1} is divided into a series of steps outlined in Section \ref{proof-thm1}. In Subsection \ref{approximation}, we construct an approximate solution and demonstrate its convergence. The convergence proof relies on several uniform a priori estimates and compactness arguments, which are established in Subsection \ref{estim-apro}. Finally, we conclude the proof of our result in Subsection \ref{conclude-proof}.


\section{Proof of Theorem \ref{theo1}}\label{proof-thm1}


\subsection{ Faedo-Galerkin approximate solutions}\label{approximation}

Let, for $i=b,ts$, $\left\{ (\xi_\ell, \alpha_{i,\ell}, \beta_{i,\ell}) \right\}_{\ell=1,2, \ldots} \subset \boldsymbol{\mathcal{H}}^{\boldsymbol{u}}_{0} \times \mathcal{H}^{\theta_{i}}_0 \times \mathcal{H}^{\varphi_{i}}_0$ be an orthonormal basis of $\mathcal{L}_{\boldsymbol{b}} \times \mathcal{H}^{\theta_{i}}_0 \times \mathcal{H}^{\varphi_{i}}_0$. We set ${\boldsymbol{V}}^{n} \times X^n_{i} \times Y^n_{i} = \operatorname{span} \left\{ (\xi_1, \alpha_{i,1}, \beta_{i,1}), \ldots, (\xi_n, \alpha_{i,n}, \beta_{i,n}) \right\}$.
The approximate Faedo-Galerkin problem to be solved is then:
Determine $\boldsymbol{u}^n-\boldsymbol{u}_d \in H^1(0,T; \boldsymbol{\mathcal{H}}^{\boldsymbol u}_{0})$, $\theta^n_i -\theta_d\in H^1(0,T; \mathcal{H}^{\theta_{i}}_0)$, and $\varphi^n_i -{\varphi}_{i,d}\in H^1(0,T; \mathcal{H}^{\varphi_{i}}_0)$,
\begin{equation}\label{probl-var-appr}
	\begin{split}
		\left\langle\partial_t\boldsymbol{u}^n, \xi_\ell \right\rangle+a_{\boldsymbol{u}}(\theta^n_{b}, \boldsymbol{u}^n, \xi_\ell)+b(\boldsymbol{u}^n, \boldsymbol{u}^n, \xi_\ell)-(\boldsymbol{F}^n, \xi_\ell)&=0, \\
		\sum_{i=b,ts} \left\langle\partial_t\theta^n_{i}, \alpha_{i,\ell} \right\rangle 
		+\sum_{i=b,ts}a_{\theta_{i}}(\theta^n_{i} ; \theta^n_{i}, \alpha_{i,\ell})
		+ d(\boldsymbol{u}^n, \theta^n_{b}, \alpha_{1,\ell})
		-\sum_{i=b,ts} c_{\varphi^n_{i}}(\theta^n_{i} ,\varphi^n_{i},\alpha_{i,\ell})&= 0,
		\\ 	  		
		\sum_{i=b,ts}a_{\varphi_{i}^n}(\theta_{i}^n,\varphi_i^n, \beta_{i,\ell} )&=0,
	\end{split}
\end{equation} 
for $\ell=1,\dots,n$, where 
\begin{equation}
	\begin{split}
		\boldsymbol{u}^{n}=\sum_{\ell=1}^{n} \boldsymbol{u}^n_{\ell} \xi_\ell, 
		\qquad \theta^n_j= \sum_{\ell=1}^{n} \theta_{i,\ell}^{n} \alpha_{i,\ell},
		\qquad \varphi^n_j= \sum_{\ell=1}^{n} \varphi_{i,\ell}^{n} \beta_{i,\ell}\quad \text{ for $i=b,\, ts$}.
	\end{split}
\end{equation}
The initial conditions of the ODE system are then given by
\begin{equation}\label{syst:ICreg}
	\begin{array}{lcl}
		\displaystyle \boldsymbol{u}^n(0)=\sum_{\ell=0}^{n}\boldsymbol{u}_{0,\ell}\xi_\ell,& &\textrm{where }\boldsymbol{u}_{0,\ell}=\left( \boldsymbol{u}_0,\xi_\ell\right)_{L^2}, \\
		\displaystyle \theta^n_i(0)=\sum_{\ell=0}^{n}  \theta_{i,0,\ell} \alpha_{i,\ell},&  &\textrm{where }\theta_{i,0,\ell}=\left( \theta_{i,0}, \alpha_{i,\ell}\right)_{L^2}, 
	\end{array}
\end{equation}
for $i=b,ts$. We use the following assumption for the initial conditions
\begin{equation}\label{eq:uie-init}
	\boldsymbol{u}_0 \in L^2(\Omega_b),\, \, \theta_{b,0}\in L^2(\Omega_b) \,\, \text{and}\,\,
	\theta_{ts,0}\in L^2(\Omega_{ts}).
\end{equation}
In \eqref{probl-var-appr} we have used a finite dimensional approximation of $\boldsymbol{F}$ 
$$
\boldsymbol{F}^n(t,x)=\sum_{\ell=0}^{n}\left( \boldsymbol{F},\xi_\ell\right)_{L^2}(t)\xi_\ell(x).
$$
Using the orthonormality of the basis, we can write \eqref{probl-var-appr} more explicitly as a system of ordinary differential equations:
\begin{equation}\label{probl-var-appr-ODE}
	\begin{array}{rll}
		\displaystyle 	\frac{d \boldsymbol{u}^n_{\ell}}{dt}&=& \displaystyle -a_{\boldsymbol{u}}(\theta^n_{b}, \boldsymbol{u}^n, \xi_\ell)-b(\boldsymbol{u}^n, \boldsymbol{u}^n, \xi_\ell)
		+(\boldsymbol{F}^n, \xi_\ell)  \\ 
		&:=& \displaystyle F_{\ell,\boldsymbol{u}^n}(t, \left\{\boldsymbol{u}^n_{k}\right\}_{k=1}^{n},\left\{\theta^n_{b,k}\right\}_{k=1}^{n}),\\ \\
		\displaystyle		\sum_{i=b,ts}\frac{d \theta_{i,\ell}^{n}}{dt}&=&\displaystyle - d(\boldsymbol{u}^n, \theta^n_{b}, \alpha_{1,\ell})
		-\sum_{i=b,ts}a_{\theta_{i}}(\theta^n_{i} ; \theta^n_{i}, \alpha_{i,\ell})
		+\sum_{i=b,ts} c_{\varphi^n_{i}}(\theta^n_{i} ,\varphi^n_{i},\alpha_{i,\ell})\\ 
		&:=&\displaystyle  \sum_{i=b,ts}F_{\ell,\theta_i}(t, \left\{\boldsymbol{u}^n_{k}\right\}_{k=1}^{n},
		\left\{\theta^n_{b,k}\right\}_{k=1}^{n},\left\{\theta^n_{ts,k}\right\}_{k=1}^{n},
		\left\{\varphi^n_{b,k}\right\}_{k=1}^{n}\left\{\varphi^n_{ts,k}\right\}_{k=1}^{n}),\\
		\\ 	  		
		\displaystyle		\sum_{i=b,ts}a_{\varphi_{i}^n}(\theta_{i}^n,\varphi_i^n, \beta_{i,\ell} )&=&0,
	\end{array}
\end{equation}  
for $i=b,ts$. 
From the assumptions on the data of the model, the functions $F_{\ell,\boldsymbol{u}^n}$ and $F_{\ell,\theta_i}$ (for $i=b,ts$) are Caratheodory functions. Therefore, according to  ordinary differential equation  theory, functions $\left\{\boldsymbol{u}^n_{k}\right\}_{k=1}^{n}$, $\left\{\theta^n_{b,k}\right\}_{k=1}^{n}$, and $\left\{\theta^n_{ts,k}\right\}_{k=1}^{n}$, satisfying the equations,  exist and are absolutely continuous. Consequently, a weak local solution exists for all $t\in (0,t_0)$ with $0<t_0<T$. To establish the existence of $\varphi_i^n$, we regularize the elliptic equation of $\varphi_i^n$ in a manner akin to \cite{B19}. Specifically, we add the term $\varepsilon_n \sum_{i=b,ts}(\partial_t \varphi_i^n, \beta_{i,\ell})$ to the left-hand side of the last equation in \eqref{probl-var-appr-ODE}, where $\varepsilon_n:=1/n$. We fix the initial data as $\varphi_{i}^n(0)=0$  and choosing the regularity condition of $\varphi_d$ that is $\partial_t \varphi_d=0$. Then the system involving the new unknown $\varphi_{i}^{n}$ (for $i=b,ts$) becomes a well-posed ordinary differential equation problem, and solutions are defined globally on $[0,T]$. The regularity parameter $\varepsilon_n$ tends towards 0 (as $n$ to $+\infty$) at the end of the calculations, so to simplicity we neglect these regularity terms.
	Then, that the Galerkin solutions $(\boldsymbol{u}^n,\pi^n,\theta_b^n,\theta_{ts}^n,\varphi_{b}^n,\varphi_{ts}^n)$
	satisfy the following weak formulation :

\begin{eqnarray}\label{wf-apriori}
	\begin{split}
		\int_{0}^{T}	\left\langle\partial_t\boldsymbol{u}^n, \psi\right\rangle\, dt+\int_{0}^{T}a_{\boldsymbol{u}}(\theta_b^n ; \boldsymbol{u}^n, \boldsymbol{\psi})\, dt+\int_{0}^{T}b(\boldsymbol{u}^n, \boldsymbol{u}^n, \boldsymbol{\psi})\, dt
		-\int_{0}^{T}(\boldsymbol{F}^n, \boldsymbol{\psi})\, dt&=0,
		\\  
		\sum_{i=b,ts}\int_{0}^{T}\left\langle\partial_t\theta_{i}^n, S_{i} \right\rangle\, dt+\sum_{i=b,ts}\int_{0}^{T}a_{\theta_{i}}(\theta_{i}^n ; \theta_{i}^n, S_{i} )\, dt+\int_{0}^{T}d(\boldsymbol{u}^n, \theta_b^n, S_{b})\, dt
		-\sum_{i=b,ts}\int_{0}^{T}c_{\varphi_{i}}(\theta_{i}^n ,\varphi_{i}^n,S_{i})\, dt 
		&=0,
		\\ 	  		
		\sum_{i=b,ts}	\int_{0}^{T}a_{\varphi_{i}}(\theta_{i}^n ; \varphi_{i}^n, \phi_i )&=0,
	\end{split}
\end{eqnarray}  
for all test functions $\boldsymbol{\psi} \in \boldsymbol D([0,T); \boldsymbol {\mathcal{H}}^{\boldsymbol{u}}_{\boldsymbol{0}})$,
	$S_i \in D([0,T); \mathcal{H}^{\theta_i}_0\cap L^{\infty}(\Omega_i))$ and $\phi_i\in D([0,T); \mathcal{H}^{\varphi_i}_0)$,
	for $i=b,ts$, with $S_{b}=S_{ts}$ on $\Sigma_{T,7}\cup \Sigma_{T,8}$.
	
Throughout the rest of the paper, we will always use positive constants $c$, $C(\cdot)$, $C(\cdot,\cdot)$, $C$, $C_1$, $C_2\cdots$,  which are not specified and which may differ from line to line.
\subsection{Basic a priori estimates}\label{estim-apro}
 To establish the global existence of the Faedo-Galerkin weak solution, we rely on a series of basic energy-type estimates as presented in the following lemma. 
\begin{lemma} 
	\label{lem2}
Under the above assumptions, there exists a constant $C>0$ not depending on $n$ such that
		\begin{eqnarray}
			\label{eq:Lemme1}
			\left\|\boldsymbol{u}^n\right\|_{L^{\infty}(0,T;L^{2}(\Omega_b))}+\left\|\nabla \boldsymbol{u}^n\right\|_{L^2(\Omega_{T,b},\mathbb{R}^3)}
			+\left\|\partial_t\boldsymbol{u}^{n}\right\|_{L^1(0,T;\boldsymbol{H}^{-1}(\Omega_{b}))}
			&\leq & C, 
			\\
			\label{eq:Lemme2}
			\sum_{i=b,ts}\left\|\varphi_{i}^n\right\|_{L^{\infty}(\Omega_{T,i})}
			+\sum_{i=b,ts} \left\|\nabla \varphi_{i}^n\right\|_{L^2(\Omega_{T,i})}
			&\leq & C,
			\\
			\label{eq:Lemme3}
			\sum_{i=b,ts}	\left\| \theta_i^n\right\|_{L^{\infty}(0,T;L^{2}(\Omega_i))}
			+\sum_{i=b,ts} \left\|\nabla \theta_i^n\right\|_{L^2(\Omega_{T,i})}
			+\sum_{i=b,ts}\left\|\partial_t\theta_i^{n}\right\|_{L^1(0,T;H^{-1}(\Omega_{i}))}
			&\leq & C.
		\end{eqnarray}
	
\end{lemma}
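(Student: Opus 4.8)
The plan is to derive the three bounds in the order potential $\to$ temperature $\to$ velocity, since the Joule source couples the equations and the potential estimate is both the cleanest and the one on which the others depend.

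First I would establish \eqref{eq:Lemme2}. Because the discrete potential equation $\sum_{i}a_{\varphi_i}(\theta_i^n;\varphi_i^n,\phi_i)=0$ is elliptic and decoupled in time, I would test it with the admissible choice $\phi_i=\varphi_i^n-\varphi_{i,d}\in\mathcal{H}^{\varphi_i}_0$. The lower bound $\underline{\sigma}_i$ in \eqref{assump-1} gives coercivity, while the upper bound $\overline{\sigma}_i$ together with $\varphi_{i,d}\in L^\infty(0,T;H^1(\Omega_i))$ controls the right-hand side after a Young inequality, yielding $\|\nabla\varphi_i^n\|_{L^2(\Omega_{T,i})}\le C$. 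The $L^\infty$ bound on $\varphi_i^n$ is obtained separately by a Stampacchia truncation / maximum-principle argument: testing $-\mathrm{div}(\sigma_i(\theta_i^n)\nabla\varphi_i^n)=0$ with the truncation $(\varphi_i^n-\varphi_{i,d})^{+}$ above the level $\|\varphi_{i,d}\|_{L^\infty}$ (and symmetrically below) forces $\|\varphi_i^n\|_{L^\infty(\Omega_{T,i})}\le\|\varphi_{i,d}\|_{L^\infty}$, which is finite by \eqref{assump-2}. This $L^\infty$ control of the potential is the device that makes the Joule term tractable.

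Next, for \eqref{eq:Lemme3} I would test the temperature equation with $S_i=\theta_i^n-\theta_d$. The diffusion form $a_{\theta_i}$ is coercive by $\underline{\eta}_i$; the convective coupling $d(\boldsymbol{u}^n,\theta_b^n,\theta_b^n-\theta_d)$ is handled using incompressibility of $\boldsymbol{u}^n$ (so that $d(\boldsymbol{u}^n,\theta_b^n,\theta_b^n)$ reduces to boundary contributions) and absorbed with a small fraction of the diffusion. The delicate term is the Joule contribution $c_{\varphi_i}(\theta_i^n;\varphi_i^n,\theta_i^n-\theta_d)=\int_{\Omega_i}\sigma_i(\theta_i^n)|\nabla\varphi_i^n|^2(\theta_i^n-\theta_d)$, which a priori lives only in $L^1(\Omega_i)$ and cannot be paired with an $H^1$ function in dimension three. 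Here I would invoke the potential equation once more, tested against $(\varphi_i^n-\varphi_{i,d})(\theta_i^n-\theta_d)$, to rewrite that integral in terms of $\int_{\Omega_i}\sigma_i(\theta_i^n)(\varphi_i^n-\varphi_{i,d})\nabla\varphi_i^n\cdot\nabla(\theta_i^n-\theta_d)$ plus lower-order pieces. Since $\varphi_i^n\in L^\infty$ from the previous step and $\nabla\varphi_i^n\in L^2$, Cauchy--Schwarz and Young produce a bound of the form $\varepsilon\|\nabla\theta_i^n\|_{L^2}^2+C_\varepsilon\|\varphi_i^n\|_{L^\infty}^2\|\nabla\varphi_i^n\|_{L^2}^2$, whose first term is absorbed into the diffusion and whose second term is already controlled by \eqref{eq:Lemme2}. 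A Gronwall argument then closes \eqref{eq:Lemme3}.

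Finally \eqref{eq:Lemme1} follows from the classical Navier--Stokes energy estimate: test the momentum equation with $\boldsymbol{u}^n-\boldsymbol{u}_d$, use Korn's inequality and $\underline{\nu}$ for coercivity of $a_{\boldsymbol{u}}$, exploit $b(\boldsymbol{u}^n,\boldsymbol{u}^n,\boldsymbol{u}^n)=0$ from Lemma \ref{pro-trilinear} so that only $b(\boldsymbol{u}^n,\boldsymbol{u}^n,\boldsymbol{u}_d)$ needs controlling (via the continuity estimate of the trilinear form, interpolation, and Young), and apply Gronwall with the data bounds \eqref{assump-2}--\eqref{assump-3}. The time-derivative bounds are then read off the equations by duality, by writing $\partial_t\boldsymbol{u}^n$ and $\partial_t\theta_i^n$ as the remaining terms and estimating each in the relevant dual norm; the decisive point is the inertial term, for which \eqref{bounded-trilniear} gives $\|B(\boldsymbol{u}^n,\boldsymbol{u}^n)\|\le C_1\|\boldsymbol{u}^n\|_{L^2}^{1/2}\|\boldsymbol{u}^n\|_{H^1}^{3/2}$, a quantity bounded in $L^{4/3}(0,T)\subset L^1(0,T)$ once the $L^\infty(L^2)$ and $L^2(H^1)$ bounds are in hand. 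This integrability (together with the merely $L^1$-in-space Joule source) is exactly why the sharp time-derivative estimates are stated in $L^1(0,T;H^{-1})$ rather than $L^2$. The main obstacle throughout is the low, only $L^1$, integrability of the Joule heating $\sigma_i(\theta_i^n)|\nabla\varphi_i^n|^2$: everything hinges on first securing the $L^\infty$ bound for $\varphi_i^n$ and then using the potential equation to transfer one derivative off the quadratic term so that the temperature estimate can be closed.
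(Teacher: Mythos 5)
Your proposal follows essentially the same route as the paper: an $L^\infty$ bound on $\varphi_i^n$ by a maximum-principle/truncation argument and an $H^1$ bound by testing with $\varphi_i^n-\varphi_{i,d}$; the Joule term $\int_{\Omega_i}\sigma_i(\theta_i^n)|\nabla\varphi_i^n|^2(\theta_i^n-\theta_d)\,dx$ tamed by using the potential equation to shift one gradient onto the temperature test function so that only $\|\varphi_i^n\|_{L^\infty}\|\nabla\varphi_i^n\|_{L^2}\|\nabla(\theta_i^n-\theta_d)\|_{L^2}$ remains; the velocity via the standard energy estimate with the trilinear-form interpolation and Gronwall; and the time derivatives by duality. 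The only caveat is ordering: your temperature Gronwall needs the $L^2(0,T;H^1)$ bound on $\boldsymbol{u}^n$ to control the convective remainder $\int_{\Omega_b}\boldsymbol{u}^n\cdot\nabla\theta_d\,(\theta_b^n-\theta_d)\,dx$, so the (self-contained) velocity estimate must be secured before, not after, the temperature one, as the paper does.
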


\begin{proof}
	{\it  Proof of (\ref{eq:Lemme1})}. First, we substitute $\boldsymbol{\psi}:=\boldsymbol{u}^n-\boldsymbol{u}_d$ into \eqref{wf-apriori}. Then, we disregard the time integration and combine the resulting equations to obtain:
	\begin{equation}
		\begin{split}\label{est1-apriori}
			&\frac{1}{2}\dfrac{d}{dt}\int_{\Omega_{b}}|\boldsymbol{u}^n-\boldsymbol{u}_d|^2\, dx
			+a_{u}(\theta^n_{b} ; \boldsymbol{u}^n-\boldsymbol{u}_d, \boldsymbol{u}^n-\boldsymbol{u}_d) \\
			=&-a_{u}(\theta^n_{b} ; \boldsymbol{u}_d, \boldsymbol{u}^n-\boldsymbol{u}_d)
			-b(\boldsymbol{u}^n, \boldsymbol{u}^n, \boldsymbol{u}^n-\boldsymbol{u}_d) 
			+\int_{\Omega_{b}}\boldsymbol{F}^n\cdot (\boldsymbol{u}^n- \boldsymbol{u}_d)\, dx-\int_{\Omega_{b}} \partial_t \boldsymbol{u}_d (\boldsymbol{u}^n-\boldsymbol{u}_d)\\
			:=&I_1+I_2+I_3+I_4.
		\end{split}
	\end{equation}
	From the definition of the form $a_u$,  it follows  that
	\begin{equation}\label{est1-apriori-1}
		\begin{split}
			|I_1|&=|-a_{u}(\theta^n_{b} ; \boldsymbol{u}_d, \boldsymbol{u}^n-\boldsymbol{u}_d)| 
			\\
			&\leq\overline{\nu}\|\nabla\boldsymbol{u}_d\|_{L^2} \|\nabla(\boldsymbol{u}^n-\boldsymbol{u}_d)\|_{L^2}
			\\
			&\leq
			\delta\|\boldsymbol{u}^n-\boldsymbol{u}_d\|_{H^1}^2+C(\delta,\overline{\nu})\|\nabla\boldsymbol{u}_d\|_{L^2}^2.
		\end{split}
\end{equation}
	Using the properties of the trilinear form $b$, $\operatorname{div}\boldsymbol{u}^n=0$ and $\boldsymbol{u}^n=\boldsymbol{u}_d$ on $\Sigma_b$, we obtain 
	\begin{equation*}\begin{split}
			-I_2&=
			b(\boldsymbol{u}^n, \boldsymbol{u}^n, \boldsymbol{u}^n-\boldsymbol{u}_d)\\
			&=\sum_{i,j=1}^N \int_{\Omega_b}\boldsymbol{u}^n_i\, \partial_{x_i} \boldsymbol{u}^n_j \, (\boldsymbol{u}^n_j-\boldsymbol{u}_{j,d})\, dx\\
			&=-\sum_{i,j=1}^N \int_{\Omega_b} \partial_{x_i} \boldsymbol{u}^n_i\,\boldsymbol{u}^n_j \, (\boldsymbol{u}^n_j-\boldsymbol{u}_{j,d})\, dx 
			-  \sum_{i,j=1}^N \int_{\Omega_b}  \boldsymbol{u}^n_i\,\boldsymbol{u}^n_j \, \partial_{x_i} (\boldsymbol{u}^n_j-\boldsymbol{u}_{j,d})\, dx\\
			&=-\int_{\Omega_b}\operatorname{div} \boldsymbol{u}^n \,\boldsymbol{u}^n (\boldsymbol{u}^n-\boldsymbol{u}_{d})\, dx
			-\frac{1}{2}\sum_{i,j=1}^N\int_{\Omega_b}\boldsymbol{u}^n_i \,\partial_{x_i} ({\boldsymbol{u}^n_j})^2\, dx
			+\sum_{i,j=1}^N \int_{\Omega_b}  \boldsymbol{u}^n_i\,\boldsymbol{u}^n_j \, \partial_{x_i} \boldsymbol{u}_{j,d}\, dx\\
			&=\frac{1}{2}\int_{\Omega_b}\operatorname{div} \boldsymbol{u}^n \,|\boldsymbol{u}^n|^2\, dx-\frac{1}{2}\int_{\Sigma_b} \left|\boldsymbol{u}_d\right|^2 \boldsymbol{u}_d \cdot \boldsymbol{n}_b dy
			+\sum_{i,j=1}^N \int_{\Omega_b}  \boldsymbol{u}^n_i\,\boldsymbol{u}^n_j \, \partial_{x_i} \boldsymbol{u}_{j,d}\, dx\\
			&=-\frac{1}{2}\int_{\Sigma_b} \left|\boldsymbol{u}_d\right|^2 \boldsymbol{u}_d \cdot \boldsymbol{n}_b dy
			+\sum_{i,j=1}^N \int_{\Omega_b}  \boldsymbol{u}^n_i\,\boldsymbol{u}^n_j \, \partial_{x_i} \boldsymbol{u}_{j,d}\, dx
			\\	&:=I_{21}+I_{22}.
	\end{split}\end{equation*} 
	Obviously, $I_{21}$ is bounded. Let us proceed with the estimation of the term $I_{22}$. To begin, we can reformulate $I_{22}$ as follows:
	\begin{equation*}\begin{split}
			I_{22}=	b(\boldsymbol{u}^n, \boldsymbol{u}_d, \boldsymbol{u}^n)
			&=b(\boldsymbol{u}_d, \boldsymbol{u}_d, \boldsymbol{u}_d)+b(\boldsymbol{u}^n-\boldsymbol{u}_d, \boldsymbol{u}_d, \boldsymbol{u}_d)+b(\boldsymbol{u}_d, \boldsymbol{u}_d, \boldsymbol{u}^n-\boldsymbol{u}_d)+b(\boldsymbol{u}^n-\boldsymbol{u}_d, \boldsymbol{u}_d, \boldsymbol{u}^n-\boldsymbol{u}_d).
	\end{split}\end{equation*}
	Since $\boldsymbol{u}_d$ is assumed to be sufficiently regular, we have the boundedness of $b(\boldsymbol{u}_d, \boldsymbol{u}_d, \boldsymbol{u}_d)$. 
	While  the second and the third terms on the right-hand side of $I_{22}$ can be estimated by
	\begin{equation*}\begin{split}
		|b(\boldsymbol{u}^n-\boldsymbol{u}_d, \boldsymbol{u}_d, \boldsymbol{u}_d)|+|b(\boldsymbol{u}_d, \boldsymbol{u}_d, \boldsymbol{u}^n-\boldsymbol{u}_d)|
		&\leq \|\boldsymbol{u}^n-\boldsymbol{u}_d\|_{L^4}\|\nabla\boldsymbol{u}_d\|_{L^2}\|\boldsymbol{u}_d\|_{L^{4}}+\|\boldsymbol{u}_d\|_{L^{4}}\|\nabla\boldsymbol{u}_d\|_{L^2}\|\boldsymbol{u}^n-\boldsymbol{u}_d\|_{L^4}
		\\&\leq c\|\boldsymbol{u}^n-\boldsymbol{u}_d\|_{H^1}\|\nabla\boldsymbol{u}_d\|_{L^2}\|\boldsymbol{u}_d\|_{L^{4}}+c\|\boldsymbol{u}_d\|_{L^{4}}\|\nabla\boldsymbol{u}_d\|_{L^2}\|\boldsymbol{u}^n-\boldsymbol{u}_d\|_{H^1}
		\\&\leq \delta\|\boldsymbol{u}^n-\boldsymbol{u}_d\|_{H^1}^2+ C(\delta)\|\nabla\boldsymbol{u}_d\|_{L^2}^2\|\boldsymbol{u}_d\|_{L^{4}}^2.
	\end{split}
\end{equation*}
	Furthermore, we can apply Young's inequality with a parameter $\delta$ ($ab \leq \delta a^{p} + C(\delta) b^{q}$, 
	where $a, b > 0$, $\delta > 0$, $1 < p, q < \infty$, and $1/p + 1/q = 1$, with $C(\delta) = (\delta p)^{-q/p} q^{-1}$)
	and employ the Gagliardo-Nirenberg interpolation inequality (cf.~\cite[Theorem 5.8]{Adams2003}), to derive the following:
	\[\begin{aligned}
		{\|\boldsymbol{u}^n - \boldsymbol{u}_d\|}_{\boldsymbol{L}^{4}(\Omega_b)} &\leq c{\|\boldsymbol{u}^n - \boldsymbol{u}_d\|}_{\boldsymbol{H}^1(\Omega_b)}^{\zeta} \|\boldsymbol{u}^n - \boldsymbol{u}_d\|_{\boldsymbol{L}^{2}(\Omega_b)}^{1-\zeta}, \hbox{ for } \zeta = N/4.
	\end{aligned}\]
	Thus, the remaining term in $I_{22}$ can be estimated as follows
		\begin{equation*}\begin{split}
			|b(\boldsymbol{u}^n-\boldsymbol{u}_d, \boldsymbol{u}_d, \boldsymbol{u}^n-\boldsymbol{u}_d)|
			&\leq \|\boldsymbol{u}^n-\boldsymbol{u}_d\|_{L^4}\|\nabla\boldsymbol{u}_d\|_{L^2}\|\boldsymbol{u}^n-\boldsymbol{u}_d\|_{L^4}
			\\&\leq c \|\boldsymbol{u}^n-\boldsymbol{u}_d\|_{L^2}^{2(1-\zeta)}\|\boldsymbol{u}^n-\boldsymbol{u}_d\|_{H^1}^{2\zeta}\|\nabla\boldsymbol{u}_d\|_{L^2} 
			\\&\leq \delta\|\boldsymbol{u}^n-\boldsymbol{u}_d\|_{H^1}^{2}+C(\delta)\|\nabla\boldsymbol{u}_d\|_{L^2} ^{\frac{1}{1-\zeta}}\|\boldsymbol{u}^n-\boldsymbol{u}_d\|_{L^2}^2.
		\end{split}
	\end{equation*}	
	Then, we deduce that 
		\begin{equation*}\begin{split}
			|I_{22}|&\leq \delta\|\boldsymbol{u}^n-\boldsymbol{u}_d\|_{H^1}^2+C(\delta)\|\nabla\boldsymbol{u}_d\|_{L^2} ^{\frac{1}{1-\zeta}}\|\boldsymbol{u}^n-\boldsymbol{u}_d\|_{L^2}^2.
		\end{split}
	\end{equation*}
	This implies
	\begin{equation}\label{est2-apriori-1}
		\left|I_2\right|
		\leq 
		\delta\|\boldsymbol{u}^n-\boldsymbol{u}_d\|_{H^1}^2+C(\delta,\overline{\nu})\|\nabla\boldsymbol{u}_d\|_{L^2}^2 +C(\delta)\|\nabla\boldsymbol{u}_d\|_{L^2} ^{\frac{1}{1-\zeta}}\|\boldsymbol{u}^n-\boldsymbol{u}_d\|_{\boldsymbol{L}^2}^2. 
	\end{equation}
	By applying the same arguments again, we obtain
	\begin{equation}\label{est3-apriori-1}
		\begin{split}
	\left|I_3\right|\leq c\|\boldsymbol{F}\|_{\boldsymbol{L}^2(\Omega_{b})}^2+c\|\boldsymbol{u}^n-\boldsymbol{u}_d\|_{\boldsymbol{L}^2(\Omega_{b})}^2
		\end{split}
	\end{equation}
	and  
	\begin{equation}\label{est4-apriori-1}
		\begin{split}
		\left|I_4\right|\leq \delta\|\boldsymbol{u}^n-\boldsymbol{u}_d\|_{\boldsymbol{L}^2(\Omega_{b})}^2 +C(\delta)\|\partial_t\boldsymbol{u}_d\|_{\boldsymbol{H}^{-1}(\Omega_{b})}^2.
			\end{split}
	\end{equation}
	Combining the results obtained in \eqref{est1-apriori-1}, \eqref{est2-apriori-1}, \eqref{est3-apriori-1}, and \eqref{est4-apriori-1}, and taking $\delta$ such that the constant $\displaystyle C(\underline{\nu},{\delta})=\frac{\underline{\nu}}{2}-\delta>0$, we can derive 
\begin{equation}\label{ineq:elast-m}
	\begin{split}
		&\dfrac{d}{dt}\|\boldsymbol{u}^n-\boldsymbol{u}_d\|^2_{\boldsymbol{L}^2(\Omega_{b})}+C(\underline{\nu},{\delta})\|\boldsymbol{u}^n-\boldsymbol{u}_d\|_{\boldsymbol{H}^1(\Omega_{b})}^2
		\\
		&\qquad\leq  
		C\|\boldsymbol{F}\|_{\boldsymbol{L}^2(\Omega_{b})}^2+C\|\partial_t\boldsymbol{u}_d\|_{\boldsymbol{H}^{-1}(\Omega_{b})}^2+
		C(\delta,\overline{\nu})\|\nabla\boldsymbol{u}_d\|_{L^2}^2+ 
		C\|\boldsymbol{u}^n-\boldsymbol{u}_d\|_{\boldsymbol{L}^2}^2\Bigl(1+ \|\nabla\boldsymbol{u}_d\|_{L^2} ^{\frac{1}{1-\zeta}}\Bigl).
	\end{split}
\end{equation}
	Since $\boldsymbol{u}_d\in \boldsymbol{L}^4(0,T;H^{1}(\Omega)),\,\partial_t\boldsymbol{u}_d\in \boldsymbol{L}^2(0,T;\boldsymbol{H}^{-1}(\Omega_{b}))$, $\boldsymbol{F}\in \boldsymbol{L}^2(\Omega_T)$,  $\boldsymbol{u}^n(0)$ and $\boldsymbol{u}_d(0) \in L^2(\Omega_b, \mathbb{R}^3)$, integrating \eqref{ineq:elast-m} over $(0, t)$ with $0 < t \leq T$, we can apply the Gröenwall inequality to obtain: 
	\begin{equation}\label{ineq:est-u_n-L2}
	\sup_{0 < t \leq T}	\|\boldsymbol{u}^n(t) - \boldsymbol{u}_d(t)\|^2_{L^2(\Omega_b, \mathbb{R}^3)} \leq C,
	\end{equation}
	 where $C > 0$ is a constant depending on the $L^2$ norm of $\boldsymbol{u}_d$, $\partial_t\boldsymbol{u}_d$, $\boldsymbol{F}$,  $\boldsymbol{u}_0$ and $\boldsymbol{u}_{d,0}$. Integrating again \eqref{ineq:elast-m} in time leads to the following inequality:
	\begin{equation}\label{ineq:est-u_m}
		\int_0^T \|\boldsymbol{u}^n(t)\|^2_{H^1(\Omega_b, \mathbb{R}^{3})} \, dt \leq C.
	\end{equation}
	
		Since the assumption \eqref{assump-1},  $\{\boldsymbol{u}^n\}_{n=1}^{\infty}$ is bounded in $L^{\infty}(0,T ;\boldsymbol{L}^2(\Omega_b))\cap L^{2}(0,T ;\boldsymbol{H}^1(\Omega_b))$ (cf. \eqref{ineq:est-u_n-L2} and \eqref{ineq:est-u_m}) and  $\boldsymbol{F}^n$ is bounded in $L^2(0,T;L^2(\Omega_b))$, then we have that $\{\partial_t\boldsymbol{u}^n\}_{n=1}^{\infty}$ is bounded in $L^{1}(0,T ; (\boldsymbol {\mathcal{H}}^{\boldsymbol{u}}_{\boldsymbol{0}})')$. Indeed, for all $\boldsymbol{w}\in\boldsymbol {\mathcal{H}}^{\boldsymbol{u}}_{\boldsymbol{0}}$ we have that
		\begin{equation*}
			\begin{split}
				|\left\langle\partial_t\boldsymbol{u}^n, \psi\right\rangle| &=|-a_{\boldsymbol{u}}(\theta_b^n ; \boldsymbol{u}^n, \boldsymbol{\psi})-b(\boldsymbol{u}^n, \boldsymbol{u}^n, \boldsymbol{\psi})+(\boldsymbol{F}^n, \boldsymbol{\psi})|
				\\&\leq|a_{\boldsymbol{u}}(\theta_b^n ; \boldsymbol{u}^n, \boldsymbol{\psi})| + |b(\boldsymbol{u}^n, \boldsymbol{u}^n, \boldsymbol{\psi})|+|(\boldsymbol{F}^n, \boldsymbol{\psi})|
				\\&\leq\bar{\nu}\|\nabla\boldsymbol{u}^n\|_{L^2}\|\nabla\boldsymbol{\psi}\|_{L^2}+c\|\boldsymbol{u}^n\|_{L^{2}}^{(1-\zeta)}\|\boldsymbol{u}^n\|_{H^1}^{\zeta+1}\|\boldsymbol{\psi}\|_{H^1}+\|\boldsymbol{F^n}\|_{L^2}\|\boldsymbol{\psi}\|_{L^2},\,\hbox{ where } \zeta = N/4
				\\&\leq C_1\|\nabla\boldsymbol{u}^n\|_{L^2}\|\boldsymbol{\psi}\|_{H^1}+C_2\|\boldsymbol{u}^n\|_{H^1}^{2}\|\boldsymbol{\psi}\|_{H^1}+C_3\|\boldsymbol{F^n}\|_{L^2}\|\boldsymbol{\psi}\|_{H^1},
			\end{split}
		\end{equation*}
		where $c_1$, $c_2$ and $c_3$ are positives constants. Further, (recall that $\boldsymbol{u}^n\in L^{2}(0,T ;\boldsymbol{H}^1(\Omega_b))$) 
		\begin{equation}
			\begin{split}
				\|\partial_t\boldsymbol{u}^n\|_{L^{1}(0,T;(\boldsymbol {\mathcal{H}}^{\boldsymbol{u}}_{\boldsymbol{0}})')}
				&\leq C_1T^{1/2}\left(\int_0^T\|\nabla\boldsymbol{u}^n\|_{L^2}^2dt\right)^{1/2}+C_2\int_0^T\|\boldsymbol{u}^n\|_{H^1}^{2}dt+C_3T^{1/2}\left(\int_0^T\|\boldsymbol{F}\|_{L^2}^2dt\right)^{1/2}
				\\&\leq C.
			\end{split}
		\end{equation}
		This achieves the estimate \eqref{eq:Lemme1}.
	
	
	\noindent \textit{Proof of (\ref{eq:Lemme2}).}
	By the maximum principle, we can deduce from the equation for $\varphi_i^n$ the following inequality:
	\begin{equation}\label{borne-de-fi}
		\underline{\varphi}_d \leq \varphi_{b}^n (t_1,x_1),\; \varphi_{ts}^n(t_2,x_2)\leq \overline{\varphi}_d\qquad \text{for all }(t_1,x_1)\in \Omega_{T,b},\,(t_2,x_2)\in \Omega_{T,ts},
	\end{equation}
	where $\underline{\varphi}_d=\displaystyle \min_{(t,x)\in \overline{\Omega}_{T,b}\cup \overline{\Omega}_{T,ts}}{\varphi}_d(t,x)$ and $\overline{\varphi}_d=\displaystyle \max_{(t,x)\in \overline{\Omega}_{T,b}\cup \overline{\Omega}_{T,ts}}{\varphi}_d(t,x)$. We define $\underline{\sigma}=\min\{\underline{\sigma}_{b},\underline{\sigma}_{ts}\}$ and $\overline{\sigma}=\max\{\overline{\sigma}_{b},\overline{\sigma}_{ts}\}$.
	Now, by substituting $\phi_i=\varphi_i^n-\varphi_d$ into \eqref{wf-apriori}, we can derive the following equation:
	\begin{equation}\label{estm-de-fi-H1}
		\begin{split}
			\underline{\sigma}\sum_{i=b,ts}\iint_{\Omega_{T,i}} \left|\nabla \varphi_i^n\right|^2\, dx\, dt
			&\leq \sum_{i=b,ts}\iint_{\Omega_{T,i}} \sigma_i(\theta_i^n) \nabla \varphi_i^n \cdot \nabla \varphi_i^n\, dx\, dt\\
			&=\sum_{i=b,ts}\iint_{\Omega_{T,i}} \sigma_i(\theta_i^n) \nabla \varphi_i^n \cdot \nabla \varphi_d\, dx\, dt\\
			&\leq \frac{\underline{\sigma}}{2} \sum_{i=b,ts}\iint_{\Omega_{T,i}} \left|\nabla \varphi_i^n\right|^2\, dx\, dt
			+C(\underline{\sigma}, \overline{\sigma})\sum_{i=b,ts}\iint_{\Omega_{T,i}} \left|\nabla \varphi_d\right|^2\, dx\, dt,
		\end{split}
	\end{equation}
	where we have used the assumptions \eqref{assump-1} and the constant $C(\underline{\sigma}, \overline{\sigma})>0$ is depending on $\underline{\sigma}$ and $ \overline{\sigma}$.
	This leads to the following conclusion for $i=b,ts$:
	\begin{equation}\label{norm-fi-H1}
		\sum_{i=b,ts}\iint_{\Omega_{T,i}} \left|\nabla \varphi_i^n\right|^2\, dx\, dt\leq C.
	\end{equation}

	
	\noindent {\it  Proof of (\ref{eq:Lemme3})}.
	Let us introduce $\underline{\eta}=\min\{\underline{\eta}_{b},\underline{\eta}_{ts}\}>0$ and $\overline{\eta}=\max\{\overline{\eta}_{b},\overline{\eta}_{ts}\}>0$. Now, we focus on \eqref{wf-apriori} without considering the time integration. We use the test function $S_i=\theta_i^n(t)-\theta_{d}(t)\in\mathcal{H}^{\theta_i}_0$. This allows us to derive the following inequality:
	
	\begin{equation}\label{inquality12}
		\begin{split}
			&\frac{1}{2}\sum_{i=b,ts} \frac{d}{dt}\|\theta_{i}^n(t)-\theta_d(t)\|_{L^2}^2+\underline{\eta}\sum_{i=b,ts}\int_{\Omega_i}|\nabla (\theta_i^n(t)-\theta_{d}(t))|^2\,dx
			\\
			&\leq
			\sum_{i=b,ts} \left(\frac{1}{2}\frac{d}{dt}\|\theta_{i}^n(t)-\theta_d(t)\|_{L^2}^2+a_{\theta_i}(\theta_{i}^n,\theta_{i}^n-\theta_d(t),\theta_i^n(t)-\theta_{d}(t))\right)\\
			&=-d\left(\boldsymbol{u}^n(t), \theta_b^n(t), \theta_b^n(t)-\theta_{d}(t)\right)
			-\sum_{i=b,ts}a_{\theta_i}(\theta_{i}^n,\theta_d(t),\theta_i^n(t)-\theta_{d}(t))
			+\sum_{i=b,ts}c_{\varphi_{i}}(\theta_{i}^n ; \varphi_{i}^n, \theta_i^n(t)-\theta_{d}(t))\\
			& \qquad
			-\sum_{i=b,ts}\int_{\Omega_i}\partial_t\theta_d(t)(\theta_i^n(t)-\theta_{d}(t))\,dx
			\\
			&:=J_1+J_2+J_3 +J_4.
		\end{split}
	\end{equation}
	We reformulate the first term in \eqref{inquality12} as follows: (Recall that $\operatorname{div}\boldsymbol{u}^n=0$ and $\theta_b=\theta_s$ on $\Sigma_8$ )
	\begin{equation}\label{equ241}
		\begin{aligned}
			-J_1 &= d\left(\boldsymbol{u}^n(t), \theta_b^n(t), \theta_b^n(t)-\theta_{d}(t)\right)\\
			&= \int_{\Omega_{b}}\boldsymbol{u}^n\cdot\nabla(\theta_b^n-\theta_{d}(t))(\theta_b^n(t)-\theta_{d}(t))\,dx+\int_{\Omega_{b}}\boldsymbol{u}^n\cdot\nabla\theta_{d}(t)(\theta_b^n(t)-\theta_{d}(t))\,dx\\
			&=-\frac{1}{2}\int_{\Omega_{b}}\operatorname{div}\boldsymbol{u}^n\,(\theta_b^n-\theta_{d}(t))^2\,dx+\frac{1}{2}\int_{\Sigma_{b}}\boldsymbol{u}^n\cdot\boldsymbol{n}_b\,(\theta_b^n-\theta_{d}(t))^2\,dy+\int_{\Omega_{b}}\boldsymbol{u}^n\cdot\nabla\theta_{d}(t)(\theta_b^n(t)-\theta_{d}(t))\,dx\\
			&=\int_{\Omega_{b}}\boldsymbol{u}^n\cdot\nabla\theta_{d}(t)(\theta_b^n(t)-\theta_{d}(t))\,dx.
		\end{aligned}
	\end{equation} 
Now, to get the estimate of $J_1$, we applied the Sobolev embedding $H^1 \hookrightarrow L^p$ with $1\leq p\leq 6$, used Hölder's inequality and Young's inequality with parameter $\delta$; then, we obtained the following estimation:
	\begin{equation}\label{estm-J1}
		\begin{aligned}
			\left|J_{1}\right| &=\left|-d\left(\boldsymbol{u}^n(t), \theta_d(t), \theta_b^n(t)-\theta_{d}(t)\right)\right|\\
			&\leq\|\boldsymbol{u}^n(t)\|_{\mathbf{L}^{4}}\left\| \theta_b^n(t)-\theta_{d}(t)\right\|_{\mathbf{L}^{4}}\left\|\nabla\theta_d(t)\right\|_{L^{2}}\\
			& \leq c \|\boldsymbol{u}^n(t)\|_{\mathbf{H}^{1}}\left\| \theta_b^n(t)-\theta_{d}(t)\right\|_{\mathbf{H}^{1}}\left\|\nabla\theta_d(t)\right\|_{L^{2}}
			\\
			& \leq \frac{\delta}{2}\left\|\theta_b^n(t)-\theta_{d}(t)\right\|_{H^{1}}^{2}+C(\delta)\|\boldsymbol{u}^n(t)\|_{\mathbf{H}^{1}}^2\left\|\nabla\theta_{d}(t)\right\|_{\mathbf{L}^{2}}^2.
		\end{aligned}
	\end{equation} 
	From the definition of the form $a_{\theta_i}$, we have 
	\begin{equation*}\begin{split}
			\left|a_{\theta_i}(\theta_{i}^n,\theta_d(t),\theta_i^n(t)-\theta_{d}(t))\right|&=\left|\int_{\Omega_i}\eta_{i}(\theta_{i}^n)\nabla\theta_d(t)\nabla(\theta_{i}^n-\theta_d(t))\, dx\right|
			\\
			&\leq\overline{\eta}\|\nabla\theta_d(t)\|_{\mathbf{L}^2(\Omega_i)}\|\nabla( \theta_i^n(t)-\theta_{d}(t))\|_{\mathbf{L}^2(\Omega_i)}
			\\
			&\leq\delta\|\theta_i^n(t)-\theta_{d}(t)\|_{{H}^1(\Omega_i)}^2+C(\delta)\|\nabla\theta_d(t)\|_{\mathbf{L}^2(\Omega_i)}^2,
	\end{split}\end{equation*}
	for $i=b,\,ts$ where $C(\delta)>0$.  
	This implies that
	\begin{equation}\label{est3-apriori}
		\begin{split}
			\left|J_2\right| &\leq \delta\sum_{i=b,ts}\|\theta_i^n(t)-\theta_{d}(t)\|_{{H}^1(\Omega_i)}^2+C(\delta)\sum_{i=b,ts}\|\nabla\theta_d(t)\|_{\mathbf{L}^2(\Omega_i)}^2.
		\end{split}
	\end{equation}
	According to the definition of $c_{\varphi}$ and using Green's formula, we obtain the following expressions:
	\begin{equation}\label{}
		\begin{aligned}
			J_3&=
			\sum_{i=b,ts}	c_{\varphi_{i}^n}(\theta_i^n ; \varphi_{i}^n, \theta_i^n-\theta_{d})\\
			&=\sum_{i=b,ts}\int_{\Omega_i} \sigma_i(\theta_i^n) \nabla \varphi_i \cdot \nabla \varphi_{i}^n (\theta_i^n-\theta_{d}) \, dx\\
			&=\sum_{i=b,ts}\int_{\Omega_i} \sigma_i(\theta_i^n) \nabla \varphi_{i}^n \cdot \nabla (\varphi_{i}^n (\theta_i^n-\theta_{d})) \, dx-\sum_{i=b,ts}\int_{\Omega_i} \sigma_i(\theta_i^n)  \varphi_{i}^n  \nabla \varphi_{i}^n\cdot \nabla (\theta_i^n-\theta_{d}) \, dx
			\\
			&=\sum_{i=b,ts}\int_{\Sigma_i} (\sigma_i(\theta_i^n) \nabla \varphi_{i}^n)\cdot \boldsymbol{n}_i \varphi_{i}^n (\theta_i^n-\theta_{d}) \, dy-\sum_{i=b,ts}\int_{\Omega_i} \nabla\cdot(\sigma_i(\theta_i^n) \nabla \varphi_{i}^n) \varphi_{i}^n (\theta_i^n-\theta_{d}) \, dx
			\\&\qquad\qquad-\sum_{i=b,ts}\int_{\Omega_i} \sigma_i(\theta_i^n)  \varphi_{i}^n  \nabla \varphi_{i}^n\cdot \nabla( \theta_i^n-\theta_{d}) \, dx\\
			&=-\sum_{i=b,ts}\int_{\Omega_i} \sigma_i(\theta_i)\varphi_{i}^n \nabla \varphi_{i}^n \cdot \nabla (\theta_i^n-\theta_{d})\, dx.
		\end{aligned}
	\end{equation}
	Then, by \eqref{borne-de-fi} and \eqref{norm-fi-H1},  we conclude that: 
	\begin{equation}\label{estem-hn}
		\begin{aligned}
			\left|J_3\right|=&\left|-\sum_{i=b,ts}\int_{\Omega_i} \sigma_i(\theta_i^n)\varphi_i^n \nabla \varphi_i^n \cdot \nabla (\theta_i^n-\theta_{d})\, dx\right|\\
			\leq &\overline{\sigma}\sum_{i=b,ts} ||\varphi_i^n||_{L^{\infty}} ||\nabla \varphi_i^n||_{L^2}||\nabla \theta_i^n-\theta_{d}||_{L^2}\\
			\leq & C \sum_{i=b,ts} ||\nabla \varphi_i^n ||_{\boldsymbol{L}^2}||\theta_i^n-\theta_{d}||_{{H}^1}
			\\
			\leq &\delta\sum_{i=b,ts}||\theta_i^n-\theta_{d}||_{{H}^1}^2+C(\delta)\sum_{i=b,ts}  ||\nabla \varphi_i^n ||^2_{\boldsymbol{L}^2}.
		\end{aligned}
	\end{equation}
	Similarly to $J_3$, we have for $J_4$
		\begin{equation}\label{estem-J4}
		\left|J_4\right|\leq \delta\sum_{i=b,ts}||\theta_i^n-\theta_{d}||_{{H}^1}^2+C(\delta)\sum_{i=b,ts}{\left\|\partial_t\theta_d(t)\right\|}_{H^{-1}(\Omega_i)}^{2}.
	\end{equation}
	Now, choosing $\delta$ sufficiently small, using the estimates \eqref{ineq:est-u_m}, \eqref{norm-fi-H1}, \eqref{estm-J1},  \eqref{est3-apriori}, \eqref{estem-hn} and \eqref{estem-J4}  that leads to the following estimate  
		\begin{equation}\label{equsmall}
	\begin{split}
	&	\frac{d}{d t}\left( \sum_{i=b,ts} {\left\|\theta_{i}^n(t)-\theta_d(t)\right\|}_{L^{2}(\Omega_i)}^{2}\right)+C_1\sum_{i=b,ts}||\theta_{i}^n(t)-\theta_d(t)||_{{H}^1}^2
	\\
	&\qquad	\leq C_2\left(\sum_{i=b,ts}{\left\|\partial_t\theta_d(t)\right\|}_{H^{-1}(\Omega_i)}^{2}
	+\sum_{i=b,ts}  ||\nabla \theta_d(t) ||^2_{\boldsymbol{L}^2} +\sum_{i=b,ts}  ||\nabla \varphi_i^n ||^2_{\boldsymbol{L}^2}
	+\|\boldsymbol{u}^n(t)\|_{\mathbf{H}^{1}}^2\left\|\nabla\theta_{d}(t)\right\|_{\mathbf{L}^{2}}^2
\right)	\\&\qquad \qquad+C_3\sum_{i=b,ts}{\left\|\theta_{i}^n(t)-\theta_d(t)\right\|}_{L^{2}(\Omega_i)}^{2} 
	\end{split}
	\end{equation}
	for some constants $C_1>0$, $C_2>0$  and $C_3>0$.
	Integrating \eqref{equsmall} over $(0, t)$ with $0 < t \leq T$, we get that
		\begin{equation}\label{equsmalsl}
			\begin{split}
				& \sum_{i=b,ts} {\left\|\theta_{i}^n(t)-\theta_d(t)\right\|}_{L^{2}(\Omega_i)}^{2}
				\leq\sum_{i=b,ts} {\left\|\theta_{i}^n(0)-\theta_d(0)\right\|}_{L^{2}}^{2}+ \alpha(t)+C_3\int_{0}^{t}\sum_{i=b,ts}{\left\|\theta_{i}^n(s)-\theta_d(s)\right\|}_{L^{2}(\Omega_i)}^{2}\,ds, 
		\end{split}
	\end{equation} 
	where $$\alpha(t)=C_2\int_{0}^{t}\left(\sum_{i=b,ts}{\left\|\partial_t\theta_d(s)\right\|}_{H^{-1}(\Omega_i)}^{2}
	+\sum_{i=b,ts}  ||\nabla \theta_d(s) ||^2_{\boldsymbol{L}^2} +\sum_{i=b,ts}  ||\nabla \varphi_i^n(s) ||^2_{\boldsymbol{L}^2}
	+\|\boldsymbol{u}^n(s)\|_{\mathbf{H}^{1}}^2\left\|\nabla\theta_{d}(s)\right\|_{\mathbf{L}^{2}}^2
	\right)\,ds	>0$$ 
	 The Gronwall's inequality implies
	\begin{equation}\label{equ26}
		\begin{split}
		 \sum_{i=b,ts}\sup_{0 < t \leq T}\left\|\theta_{i}^n(t)-\theta_d\right\|_{L^{2}(\Omega_i)}^{2} & \displaystyle \leq  C
		\end{split} 
	\end{equation}
	for some constant $C>0$. By the estimates \eqref{equsmall} and \eqref{equ26}, we can conclude that there exist constants $C_1>0$ and $C_2>0$ such that
	\begin{eqnarray}
		\sum_{i=b,ts}\left\|\theta^{n}_i(t)\right\|_{L^{\infty}\left(0,T ; L^{2}(\Omega_i)\right)} \leq C_1,  \label{equ27}
		\\
		\sum_{i=b,ts}\left\|\theta^{n}_i(t)\right\|_{L^{2}\left(0,T ; H^{1}(\Omega_i)\right)} \leq C_2.\label{equ28}   
	\end{eqnarray} 
	Now, since the assumption \eqref{assump-1} be satisfied and 
		from \eqref{equ27} and \eqref{equ28}, we deduce that $\{\theta^{n}_i\}_{n=1}^{\infty}$ is bounded in $L^{\infty}(0,T ;L^2(\Omega_i))\cap L^{2}(0,T ;{H}^1(\Omega_i))$, for all $i=b,ts$. Thus,  we collect the previous results on $\boldsymbol{u}^n$ and $\varphi_i^n$ to obtain that $a_{\theta_{i}}(\theta_{i}^n ; \theta_{i}^n, \cdot )$, $d(\boldsymbol{u}^n, \theta_b^n, \cdot)$ and $c_{\varphi_{i}}(\theta_{i}^n ,\varphi_{i}^n,\cdot)$ are bounded in $L^{2}(0,T ; H^{-1}(\Omega_i))$, $L^{1}(0,T ; H^{-1}(\Omega_b))$ and $L^{2}(0,T ; H^{-1}(\Omega_i))$, respectively. Consequently, $\{\partial_t\theta_i^{n}\}_{n=1}^{\infty}$ is bounded in $L^{1}(0,T ; H^{-1}(\Omega_i))$ for all $i=b,ts$. Indeed
		\begin{equation*}
			\begin{split} 
				\sum_{i=b,ts}\int_{0}^{T}\|\partial_t\theta_{i}^n\|_{H^{-1}(\Omega_i)}\, dt&\leq\sum_{i=b,ts}\int_{0}^{T}\|a_{\theta_{i}}(\theta_{i}^n ; \theta_{i}^n, \cdot)\|_{H^{-1}(\Omega_i)}\, dt+\int_{0}^{T}\|d(\boldsymbol{u}^n, \theta_b^n,.)\|_{H^{-1}(\Omega_b)}\, dt\\
				&\qquad+\sum_{i=b,ts}\int_{0}^{T}\|c_{\varphi_{i}}(\theta_{i}^n ,\varphi_{i}^n,\cdot)\|_{H^{-1}(\Omega_i)}\, dt
				\\&\leq C(\overline{\eta},T)\sum_{i=b,ts}\|\nabla\theta_i^n\|_{L^2(0,T;L^2)}+c_1\|\boldsymbol{u}^n\|_{L^2(0,T;H^1(\Omega_b))}\|\nabla\theta_b^n\|_{L^2(0,T;L^2(\Omega_b))}\\&\qquad+C(\overline{\sigma},T)\sum_{i=b,ts}\|\varphi_i^n\|_{L^{\infty}(0,T;L^{\infty}(\Omega_i))}\|\nabla\varphi_i^n\|_{L^2(0,T;L^2(\Omega_i))}
				\\&\leq C,
			\end{split}
		\end{equation*}
for some constant $C>0$.
	
\end{proof}

\subsection{Passage to the limit and concluding the proof of Theorem \ref{theo1}}\label{conclude-proof}
Thanks to Lemma \ref{lem2}, there exist subsequences of $\{\boldsymbol{u}^n\}_{n=1}^{\infty}$, $\{\varphi_i^{n}\}_{n=1}^{\infty}$, and $\{\theta_i^{n}\}_{n=1}^{\infty}$, which will be still denoted (for simplicity) as $\{\boldsymbol{u}^n\}_{n=1}^{\infty}$, $\{\varphi_i^{n}\}_{n=1}^{\infty}$, and $\{\theta_i^{n}\}_{n=1}^{\infty}$(for $i=b,\, ts$ ), respectively, such that:
{ 
	\begin{eqnarray}\label{equ1-limit-u}
		\left\{
		\begin{split}
			\boldsymbol{u}^{n} & \rightharpoonup \, \boldsymbol{u} \text { weakly in } L^{2}\left(0, T ;\boldsymbol{\mathcal H}^{\boldsymbol u}_{\boldsymbol 0}\right),\\
			\partial_t\boldsymbol{u}^{n} & \rightharpoonup \, \partial_t\boldsymbol{u}\text { weakly in } L^{1}\left(0, T ; \mathbf (\boldsymbol{\mathcal H}^{\boldsymbol u}_{\boldsymbol 0})^{\prime}\right),
			\\
			\nabla \boldsymbol{u}^{n} & \rightharpoonup \, \nabla \boldsymbol{u} \text { weakly in } L^{2}\left(0, T ;\boldsymbol{L}^2\right),
		\end{split}\right.
	\end{eqnarray}
	\begin{eqnarray}	\label{fi3}
		\left\{
		\begin{split}
			\varphi_i^n & \rightharpoonup \, \varphi_i\hbox { weakly in } L^{\infty}\left(0, T ;L^{\infty}(\Omega_i)\right),
			\\
			\varphi_i^n & \rightharpoonup \, \varphi_i\hbox { weakly in } L^{2}\left(0, T ;H^{1}(\Omega_i)\right),
			\\
			\varphi_i^n & \rightarrow \, \varphi_i \hbox { strongly in  } L^{2}\left(0, T ; L^{2}(\Omega_i)\right),
			\\
			\nabla \varphi_i^n & \rightharpoonup\, \nabla \varphi_i \hbox { weakly in } L^{2}\left(I ; \boldsymbol{L}^{2}(\Omega_i)\right),
		\end{split}
		\right.
	\end{eqnarray}
	\begin{eqnarray}
		\left\{
		\begin{split}
			\theta_i^{n} & \rightharpoonup \, \theta_i \text { weakly in } L^{2}\left(0, T ; H^{1}\right),\label{equ29}
			\\
			\partial_t\theta_i^{n} & \rightharpoonup \, \partial_t\theta_i \text { weakly in } L^{1}\left(0, T ; H^{-1}\right),\label{equ30}
			\\
			\nabla 	\theta_i^{n} & \rightharpoonup \, \nabla \theta_i \text { weakly in } L^{2}\left(0, T ; \textbf{L}^{2}\right).
		\end{split}
		\right.  	
	\end{eqnarray}
	By the Aubin–Lions–Simon compactness theorem ($\{u\in L^2(0,T,H^1), \partial_t u\in L^1(0,T,H^{-1})\}\hookrightarrow\hookrightarrow L^2(0,T,L^2)$, see also \cite[Theorem II.5.16]{boyer2012mathematical}), we get for $i=b,\,ts$:
	\begin{eqnarray} \label{limit-Aubin–Lions–Simon}
		\left\{
		\begin{split}
			\boldsymbol{u}^{n} & \rightarrow \, \boldsymbol{u} \text { strongly in } L^{2}\left(0, T ; \mathbf L^{2}\right),
			\\
			\boldsymbol{u}^{n} & \rightarrow \, \boldsymbol{u} \text { almost everywhere in } \Omega_{b,T},\\
			\theta_i^{n} & \rightarrow \, \theta_i \text { strongly in } L^{2}\left(0, T ; L^{2}\right),
			\\
			\theta_i^{n} & \rightarrow \, \theta_i \text { almost everywhere in } \Omega_{T} \text {.}
		\end{split}
		\right.
	\end{eqnarray}
}\\ 
Next, denote the differences $\tilde{\theta}_i^{n}=\theta_i^{n}-\theta_i$, $\boldsymbol{\tilde{u}}^{n}=\boldsymbol{u}^n-\boldsymbol{u}$, and $\tilde{\varphi}_i^n=\varphi_i^n-\varphi_i$ for $i=b,\,ts$. Then, by choosing the test functions $(\boldsymbol{\psi},S_i,\phi_i)\in\boldsymbol{\mathcal{H}}^{\boldsymbol{u}}_{\textbf{0}}\times \mathcal{H}^{\theta_{i}}_0\times \mathcal{H}^{\varphi_{i}}_0$ in the weak formulation \eqref{wf-apriori} and using the results \eqref{equ1-limit-u}-\eqref{limit-Aubin–Lions–Simon}, as well as the continuity of $\nu$, $\eta_{i}$, and $\sigma_{i}$, we obtain the following limits:
{
	\begin{equation}\label{limit0}
		\begin{split}
			a_{\boldsymbol{u}}(\theta_b^n ; \boldsymbol{u}^n, \boldsymbol{\psi})-a_{\boldsymbol{u}}(\theta_b ; \boldsymbol{u}, \boldsymbol{\psi})&=a_{\boldsymbol{u}}(\theta_b^n ; \boldsymbol{\tilde{u}}^{n}, \boldsymbol{\psi})+\int_{\Omega_b} \left[\nu(\theta_b^n)-\nu(\theta_b)\right] \mathbb{D}(\boldsymbol{u}):\nabla\boldsymbol{\psi} \,dx\\
			&\leq \bar{\nu}\int_{\Omega_b}\mathbb{D}(\boldsymbol{\tilde{u}^n}):\nabla\boldsymbol{\psi}\,dx+\int_{\Omega_b} \left[\nu(\theta_b^n)-\nu(\theta_b)\right] \mathbb{D}(\boldsymbol{u}):\nabla\boldsymbol{\psi} \,dx.
		\end{split} 
	\end{equation}
	Based on the weak convergence of $\nabla\boldsymbol{u}^n$ to $\nabla\boldsymbol{u}$ in $\boldsymbol L^2(\Omega_b)$, the first integral goes to $0$ when $n$ goes to $\infty$. Furthermore, we infer the second quantity's convergence to $0$ using the Lebesgue-dominated convergence (see, for example, \cite{Brezis1983}). Then, we get  
	\begin{equation}\label{limit1}
		a_{\boldsymbol{u}}(\theta_b^n ; \boldsymbol{u}^n, \boldsymbol{\psi})\xrightarrow{n\longrightarrow \infty}   a_{\boldsymbol{u}}(\theta_b ; \boldsymbol{u}, \boldsymbol{\psi}).
	\end{equation} 
	Furthermore, following the sames methods that proved \eqref{limit1}, we also show that : 
	\begin{equation}\label{limit3}
		\begin{split}
			a_{\varphi}(\theta_i^n ,\varphi_i^n,\phi_i)-
			a_{\varphi}({\theta}_i ,\varphi_i,\phi_i)&=a_{\varphi}(\theta_i ,\tilde{\varphi}_i^n,\phi_i)+\int_{\Omega_i}[{\sigma}_i({\theta}_i^n)-\sigma_i( {\theta}_i)] \nabla {\varphi}_i \cdot \nabla \phi_i \mathrm{~d}x
			&\xrightarrow{n\longrightarrow \infty} 0 ,
			\\
			a_{\theta}\left( {\theta}_i^n , \theta_i^n, S_i\right)-a_{\theta}\left( {\theta}_i ; \theta_i, S_i\right)&=a_{\theta}\left({\theta}_i^n , \tilde{\theta}_i^{n}, S_i\right)+\int_{\Omega_i}\left[\eta_i\left({\theta}_i^{n}\right)-\eta_i\left( {\theta}_{i}\right)\right] \nabla \theta_{i} \cdot \nabla S_i \, \mathrm{d}x &\xrightarrow{n\longrightarrow \infty} 0. 
		\end{split}
	\end{equation}	
}
{
	From the compact embedding $\boldsymbol H^1(\Omega_b)\hookrightarrow\hookrightarrow \boldsymbol L^q(\Omega_b)$ (resp.  $H^1(\Omega_i)\hookrightarrow\hookrightarrow  L^q(\Omega_i)$, for $i=b,\,ts$), with $1\leq q\leq 6$(for $N=3$).
	%
	%
	Then,  using again Lebesgue-dominated convergence, we have the followings strong convergences:
	\begin{eqnarray}
		b(\boldsymbol{u}^n, \boldsymbol{u}^n, \boldsymbol{\psi})&\xrightarrow{n\longrightarrow \infty} & b(\boldsymbol{u}, \boldsymbol{u}, \boldsymbol{\psi}),
		\label{limit2}
		\\
		d\left(\boldsymbol{u}^{n}, \theta_b^n, S_{b}\right)&\xrightarrow{n\longrightarrow \infty}&  d\left(\boldsymbol{u}, \theta_b, S_{b}\right).
		\label{limit5}
	\end{eqnarray}
}
Finally, using the results of \eqref{fi3} and continuity of $\sigma_{i}$, we obtain
\begin{equation}
	\begin{aligned}
		c_{\varphi}( {\theta}_i^n ,{\varphi}_i^n,S_i )-c_{\varphi}({\theta}_i ,{\varphi}_i,S_i )&=\int_{\Omega_i}  \sigma_i( {\theta}_i^n)  \varphi_i^n \cdot \nabla \varphi_i^n\cdot\nabla S_i\, dx-\int_{\Omega}\sigma_i({\theta}_i) \varphi_i \cdot \nabla \varphi_i\nabla S_i    \mathrm{~d}x
		\\&=\left(\int_{\Omega_i}  \sigma_i( {\theta}_i^n)  \tilde{\varphi}_i^n \cdot \nabla \varphi_i^n\cdot\nabla S_i\, dx+\int_{\Omega}\left[\sigma_i( {\theta}_i^n)-\sigma_i({\theta}_i)\right] \varphi_i  \nabla \varphi_i^n\cdot\nabla S_i\right.\\& \qquad\qquad 
		\left.+\int_{\Omega_i}  \sigma_i( {\theta}_i)  \tilde{\varphi}_i^n \cdot \nabla \varphi_i^n\cdot\nabla S_i\, dx \right)\xrightarrow{n\longrightarrow \infty} 0 .
	\end{aligned}
	\label{limit6}
\end{equation}
Now, by exploiting the results \eqref{limit1}-\eqref{limit6} we can pass immediately to the limit in the weak approximate formulation \eqref{probl-var-appr} as $n\longrightarrow \infty$. The result is: the triple $(\boldsymbol{u},\theta_i,\varphi_i)$ satisfies the following variational formulation: 
\begin{equation}\label{equ33} 
	\begin{aligned}
		\left\langle\partial_t\boldsymbol{u}, \boldsymbol{\psi}\right\rangle+a_{\boldsymbol{u}}( {\theta}_b; \boldsymbol{u}, \boldsymbol{\psi})+b(\boldsymbol{u}, \boldsymbol{u}, \boldsymbol{\psi})  -(\boldsymbol{F}, \boldsymbol{\psi})&=  0,\\
		\sum_{i=b,ts}\left(\left\langle\partial_t\theta_{i}, S_{i} \right\rangle+a_{\theta_{i}}(\theta_{i} ; \theta_{i}, S_{i} )-c_{\varphi_{i}}(\theta_{i} ,\varphi_{i},S_{i}) 
		\right)+d(\boldsymbol{u}, \theta_b, S_{b})&=0,
		\\ 	  		
		\sum_{i=b,ts}a_{\varphi_{i}}(\theta_{i} ; \varphi_{i}, \phi_i )&=0,
	\end{aligned} 
\end{equation}
for every $(\boldsymbol{\psi},S_i,\phi_i)  \in\boldsymbol {\mathcal{H}}^{\boldsymbol{u}}_{\boldsymbol{0}}\times \mathcal{H}^{\theta_i}_0\times \mathcal{H}^{\varphi_i}_0$ with $S_b=S_{ts}$ on $\Sigma_b^N$ and almost every $t \in I$ and the initial condition
\begin{eqnarray}\label{equ34}
	\boldsymbol{u}(\boldsymbol{x}, 0)& = \boldsymbol{u}_{0}(\boldsymbol{x}),&\\
	\theta_i(\boldsymbol{x}, 0)&=\theta_i^{0}(\boldsymbol{x}), & \text { in } \Omega. \text{ for all }i=b,ts.
\end{eqnarray}

{
	To complete the proof of Theorem \ref{theo1}, we also need to demonstrate that $\partial_t \boldsymbol{u} \in \mathrm{L}^1\left(0, T; (\mathcal{H}^{\boldsymbol{u}}_0)^{\prime}\right)$ and $\partial_t\theta_i\in L^{1}(0,T ; (\mathcal{H}^{\theta_{i}})^{\prime}$ for all $i=b,ts$.
	Indeed, the first equation of \eqref{equ33} can be expressed as follows:
	\begin{equation}\label{eq:u-derivative}
		\frac{\mathrm{d}}{\mathrm{d} t}\langle\boldsymbol{u}, \psi\rangle=\langle\operatorname{div}\left(\nu(\theta_b) \mathbb{D}(\boldsymbol{u})\right)-B(\boldsymbol{u})+\boldsymbol{F}, \boldsymbol{\psi}\rangle \text { for all } \boldsymbol{\psi} \in \mathcal{H}^{\boldsymbol{u}} _0.
	\end{equation}
	On one hand, $\nu$ is bounded, the operator $-\operatorname{div}\left(\nu(\theta_b) \mathbb{D}(\boldsymbol{u})\right): \boldsymbol{\mathcal{H}}^{\boldsymbol{u}}_0 \rightarrow (\boldsymbol{\mathcal{H}}^{\boldsymbol{u}}_0)^{\prime}$ is linear and continuous, and $\boldsymbol{u} \in \mathrm{L}^2(0, T; \boldsymbol{\mathcal{H}}^{\boldsymbol{u}}_0)$; this implies that $-\operatorname{div}\left(\nu(\theta_b) \mathbb{D}(\boldsymbol{u})\right) \in \mathrm{L}^2\left(0, T ; (\boldsymbol{\mathcal{H}}^{\boldsymbol{u}}_0)^{\prime}\right)$.
	On the other hand, $\boldsymbol{F} \in \mathrm{L}^2(0, T; \boldsymbol{L}^{2}(\Omega_b))$. 
	By Lemma \ref{pro-trilinear}, we have established that $b(\boldsymbol{u}, \boldsymbol{u}, \boldsymbol{\psi})=\langle B(\boldsymbol{u}), \boldsymbol{\psi}\rangle$ is trilinear and continuous on $\boldsymbol{\mathcal{H}}^{\boldsymbol{u}}_0$, and $\|B(\boldsymbol{u})\|_{(\boldsymbol{\mathcal{H}}^{\boldsymbol{u}}_0)^{\prime}} \leq \|\boldsymbol{u}\|_{\boldsymbol{\mathcal{H}}^{\boldsymbol{u}} _0}^2$. Thus, $B(\boldsymbol{u}) \in \mathrm{L}^1\left(0, T, (\boldsymbol{\mathcal{H}}^{\boldsymbol{u}}_0)^{\prime}\right)$.
	Consequently,  $\partial_t \boldsymbol{u} \in \mathrm{L}^1\left(0, T; (\boldsymbol{\mathcal{H}}^{\boldsymbol{u}}_0)'\right)$.\\
	In similar ways,  the second equation of \eqref{equ33} can also be expressed by:
	\begin{equation}
		\sum_{i=b,ts}\left\langle\partial_t\theta_{i}, S_{i} \right\rangle=-\sum_{i=b,ts}a_{\theta_{i}}(\theta_{i} ; \theta_{i}, S_{i} )-d(\boldsymbol{u}, \theta_b, S_{b})
		+\sum_{i=b,ts}c_{\varphi_{i}}(\theta_{i} ,\varphi_{i},S_{i})\, \text{ for all } S_{i}\in \mathcal{H}^{\theta_i}_0(\Omega_i).
	\end{equation}
	Since Assumption \eqref{assump-1} is satisfied and $\theta_i \in L^{2}(0,T ; \mathcal{H}^{\theta_i}_0(\Omega_i))$, we deduce that $a_{\theta_{i}}(\theta_{i} ; \theta_{i}, \cdot )\in L^{2}(0,T ; L^{2}(\Omega_i))$. Furthermore, we collect the result that $\boldsymbol{u} \in \mathrm{L}^2(0, T; \boldsymbol{\mathcal{H}}^{\boldsymbol{u}}_0)$ and $\varphi_i \in L^{2}(0,T ; \mathcal{H}^{\varphi_i}_0)\cap L^{\infty}(0,T; L^{\infty}(\Omega_i))$ to get $d(\boldsymbol{u}, \theta_b, \cdot)$ and $c_{\varphi_{i}}(\theta_{i} ,\varphi_{i},\cdot)$ are bounded in $L^{1}(0,T ; H^{-1}(\Omega_b))$ and $L^{2}(0,T ; H^{-1}(\Omega_i))$, respectively. Consequently, $\partial_t\theta_i$ is bounded in $L^{1}(0,T ; (\mathcal{H}^{\theta_{i}})^{\prime})$ for $i=b,\,ts$.
	
	To introduce the pressure $\pi$, we set
	$$
	V(t)=\int_0^t \left(\nu(\theta_b) \mathbb{D}(\boldsymbol{u})\right)(s)\, d s,\, R(t)=\int_0^t(\boldsymbol{u} \cdot \nabla) \boldsymbol{u}(s) \,d s,\, \text{and} \, K(t)=\int_0^t \boldsymbol{F}(s)\,d s .
	$$ 
	It is clear that $V, K, R \in C\left(0, T ;\left(\mathrm{H}^1(\Omega)\right)^{\prime}\right)$. Integrating \eqref{eq:u-derivative} over $[0, t]$ yields
	$$
	\left\langle \boldsymbol{u}(t)-\boldsymbol{u}_0- \operatorname{div} V(t)+R(t)+K(t), \boldsymbol{\psi}\right\rangle=\mathbf{0} \text { for all } t \in[0, T] \text { and for all } \boldsymbol{\psi} \in \boldsymbol{\mathcal{H}}^{\boldsymbol{u}}_0 \text {. }
	$$
	By application of the Rham theorem \cite{R.Temam2001}, we find, for each $t \in[0, T]$, the existence of some function $P(t) \in \mathrm{L}_0^2(\Omega_b)$ such that
	$$
	\boldsymbol{u}(t)-\boldsymbol{u}_0-\operatorname{div}V(t)+R(t)+K(t)+\nabla P=\mathbf{0},
	$$
	where $\mathrm{L}_0^2(\Omega)=\left\{w \in \mathrm{L}^2(\Omega_b), \int_{\Omega} w \mathrm{~d} \boldsymbol{x}=0\right\}$. Therefore, $\nabla P \in C\left(0, T ; \mathrm{H}^{-1}(\Omega_b)\right)$, and thus $P \in C\left(0, T ; \mathrm{L}_0^2(\Omega_b)\right)$. By derivation with respect to $t$ in the sense of distributions, we obtain
	$$
	\partial_t \boldsymbol{u}-\operatorname{div}(\nu(\theta_b) \mathbb{D}(\boldsymbol{u}))+(\boldsymbol{u} \cdot \nabla) \boldsymbol{u}+\boldsymbol{F}+\nabla \pi=\mathbf{0},
	$$
	where $\pi=\partial_t P \in W^{-1, \infty}\left(0, T ; L_0^2(\Omega_b)\right)$.
}

\section{Numerical simulations}\label{numerical}  
The aim of this section is to validate our proposed model. For this, we provide three tests based on the existing data in the literature, for instance, \cite{materiel2,materiel1,materiel3}. We consider a $\Omega$ domain which is decomposed by two sub-domains $\Omega_b$ and $\Omega_{ts}$ as described in Figure \ref{Domain} where  $L= 1.5$, $H= 1$ and $r= 0.075$. 
In our numerical simulations, the electrode thickness is assumed negligible because their diameter is very small.    
In addition, the electrical conductivities $\sigma_i$, thermals conductivities $\eta_i$, and blood conductivity $\nu$ have been modeled as temperature-dependent functions and are given by the following equations 
\begin{eqnarray*}
\sigma_b(\theta_b)&=&\left\{\begin{array}{ll}
\sigma_0 \exp\left(0.015 (\theta_b - \bar{\theta})\right) & \text { for } \theta_b \leq 99^{\circ} \mathrm{C}, \\
2.5345 \sigma_0 & \text { for } 99^{\circ} \mathrm{C}<\theta_b \leq 100^{\circ} \mathrm{C}, \\
2.5345 \sigma_0\left(1- 0.198\left(\theta_b-100^{\circ} \mathrm{C}\right)\right) & \text { for } 100^{\circ} \mathrm{C}< \theta_b \leq 105^{\circ} \mathrm{C}, \\
0.025345 \sigma_0 & \text { for } \theta_b >105^{\circ} \mathrm{C},
\end{array}\right .
\\
\eta_b(\theta_b) &=&\left\{\begin{array}{ll}
\eta_0+ 0.0012 \left(\theta_b -  \bar{\theta}\right) & \text { for } \theta_b \leq 100^{\circ} \mathrm{C}, \\
\eta_0+0.0012\left(100^{\circ} \mathrm{C}-\bar{\theta}\right) & \text { for } \theta_b>100^{\circ} \mathrm{C},
\end{array}\right .
\\\sigma_{ts}(\theta_{ts})&=&
\sigma_0+ 0.02 \left(\theta_{ts} -  \bar{\theta}\right),  \\
\eta_{ts}(\theta_{ts})&=&\eta_0+0.0012\left(\theta_{ts}-\bar{\theta}\right), 
\end{eqnarray*}
where $\sigma_0 = 0.6$ and  $\eta_0 = 0.54$ are the constant electrical conductivity and
the thermal conductivity, respectively, at core body temperature, $\bar{\theta}=37^{\circ} \mathrm{C}$ and $\varphi_d=1$.
Moreover, the viscosity and density of blood are $0.0021 \mathrm{~Pa} \cdot \mathrm{s}$ and $1000 \mathrm{~kg} / \mathrm{m}^3$, respectively, while those of saline are $0.001 \mathrm{~Pa} \cdot \mathrm{s}$ and $1000 \mathrm{~kg} / \mathrm{m}^3$, respectively, based on the material property of water.  

Let now give the time discretization of our model. First, we define a time subdivision $t_0= 0 < \cdots < t_M =T$, where $M$ is an integer and the time steps is as follows $\tau_n = {t_{n+1}-t_n}, n=0, \cdots, M-1$. For the space discretization, we adopt a finite element method. Specifically, we exploit the finite element $P1-$Bubble to compute the values of the velocity variable and the $P1$ finite element to approximate the  unknowns of pressure, the temperatures and potentials. Herein, we keep the same notations of the variables $\mathbf u$, $\pi$, $\theta_i$ and $\varphi_i$ for the discrete versions.

We mention that the reformulation of the studied model into an algebraic system of differential equations is important. Indeed, it allows using a time lag scheme. That is to say, given the solution of the heat equations at the previous instant, we then solve the decoupled Navier-Stokes equations and potentials equations for the time step $n-1$ as
$$
	\begin{aligned}
		&\left\{\begin{array}{rclll}
			\boldsymbol{u}_{t}-\nabla \cdot(\nu( {\theta_b}^{n-1}) \mathbb{D}(\boldsymbol{u}))+\nabla \cdot(\boldsymbol{u} \otimes \boldsymbol{u})+\nabla \pi & =&\boldsymbol{F}({\theta_b}^{n-1})& \hbox { in } \Omega_{T,1}, \\
			\nabla \cdot \boldsymbol{u} & =&0 &\hbox{ in } \Omega_{T,1}, \\
			\boldsymbol{u} & =&\boldsymbol{u}_d &\hbox{ on }  \Sigma_b^D,
			\\
			\boldsymbol{u}(\boldsymbol{x}, 0) & =&\boldsymbol{u}_{0}(\boldsymbol{x}) &\hbox { in } \hbox { }\Omega_b ,
		\end{array}\right . \\
	\hbox{ and}&
	\\
	&	\left\{	\begin{array}{rclll}
			- \operatorname{div}(\sigma_i({\theta_i}^{n-1}) \nabla \varphi_i) & = &0 & \hbox { in } &\Omega_{T,i}, \\
			(\sigma_i({\theta_i}^{n-1})\nabla \varphi_i) \cdot \boldsymbol{n}_i & = & 0 & \hbox { on } &\Sigma_i^{N}, \\
			\varphi_i & = & \varphi_{i,d} & \hbox { on } &\Sigma^{F}.
		\end{array}\right. 
	\end{aligned}
$$
Thus, we obtain the potentials $\varphi_{b}^{n-1}$ and $\varphi_{ts}^{n-1}$,  the velocity $\boldsymbol{u}^{n-1}$ and the the pressure  $\pi^{n-1}$ at the time step $n-1$. This allows to solve the temperature equation at time $n$. \\
We mention here an interesting question which is how to treat the temperature advection-diffusion equation.
Obviously, not all discretization of this equation are equally stable without regularization techniques. 
For this reason, we can use discontinuous elements which is more efficient for pure advection problems. 
However, in the presence of diffusion terms, the discretization of the Laplace operator is cumbersome due to the large number of additional terms that must be integrated on each face between the cells.
Consequently, a better alternative is to add some nonlinear viscosity   $\tilde {\eta}(\theta)$ to the model that only acts in the vicinity of shocks and other discontinuities. 
The viscosity $\tilde {\eta}(\theta)$ is chosen in such a way that if $\theta$ satisfies the original equations, the additional viscosity is zero. 
In our case, we will opt for the stabilization strategy developed in \cite{guermond2011entropy} 
that builds on a suitably defined residual and a limiting procedure for the additional viscosity. 
For this, let us define a residual $R_\alpha(\theta)$ as follows:
$$
R_\alpha(\theta)=\left(\frac{\partial \theta}{\partial t}+\mathbf{u} \cdot \nabla \theta-\nabla \cdot \eta(\overline \theta) \nabla \theta 
      -\sigma  ( \overline \theta) |\nabla \varphi|^2 \right) \theta^{\alpha-1}, \quad \alpha \in [1,2].
$$ 
Note that $R_\alpha(\theta)$ will be zero if $\theta$ satisfies the temperature equation. Multiplying terms out, we obtain the following entirely equivalent form:
$$
R_\alpha(\theta)=\frac{1}{\alpha} \frac{\partial\left(\theta^\alpha\right)}{\partial t}+\frac{1}{\alpha} \mathbf{u} \cdot \nabla\left(\theta^\alpha\right)-\frac{1}{\alpha} \nabla \cdot \eta(\overline \theta) \nabla\left(\theta^\alpha\right)+\eta(\overline \theta)(\alpha-1) \theta^{\alpha-2}|\nabla \theta|^2-\gamma \theta^{\alpha-1}.
$$
Using the above equation, we define the artificial viscosity as a piece-wise constant function defined on each cell $K$ with diameter $h_K$ separately giving by:
$$
\left.\tilde {\eta}_\alpha(\theta)\right|_K=\beta\|\mathbf{u}\|_{L^{\infty}(K)} \min \left\{h_K, h_K^\alpha \frac{\left\|R_\alpha(\theta)\right\|_{L^{\infty}(K)}}{c(\mathbf{u}, \theta)}\right\},
$$
where, 
$\beta$ is a stabilization constant and 
$\displaystyle c(\mathbf{u}, \theta)=c_R\|\mathbf{u}\|_{L^{\infty}(\Omega)} \operatorname{var}(\theta)|\operatorname{diam}(\Omega)|^{\alpha-2}$ where
$\operatorname{var}(\theta)=\max _{\Omega} \theta-\min _{\Omega} \theta$ is the range of present temperature values  and $c_R$ is a dimensionless constant. \\
If on a particular cell the temperature field is smooth, then we expect the residual to be small and the stabilization term that injects the artificial diffusion will be rather small, when no additional diffusion is needed. 
In addition, if we are on or near a discontinuity in the temperature field, then the residual will be large and the artificial viscosity will ensure the stability of the scheme.

%

Finally, we consider the following boundary conditions. A velocity 
$\boldsymbol{u}=\boldsymbol{u}_e = \left(  \begin{array}{l} 4y(H-y) \\ 0 \end{array}\right)$ on boundary $\Sigma_1$, 
$\boldsymbol{u} =\boldsymbol{u}_s$ on boundary $\Sigma_8$, and we change the boundary condition of the viscosity in  the boundary $\Sigma_3$  that is $-\pi \boldsymbol{n}_{b}+\nu(\theta_b)\mathbb{D}(\boldsymbol{u}) \boldsymbol{n}_{b} =\mathbf{0}$.  On boundaries $\Sigma_i, i= 2,  7$,  we assume that the velocity is zero i.e $\boldsymbol{u} =\left(  \begin{array}{l} 0 \\ 0 \end{array}\right)$ on $\Sigma_{2}\cup\Sigma_{7}$.
We set on the boundaries $\Sigma_i$,  $i=1, \cdots,6$ the temperatures, $\theta_b=\theta_{ts}=37^{\circ} \mathrm{C}$.
For the potential equation, we fix $\varphi_{i}=\varphi_{d}=1$ on $\Sigma_8$ and the homogeneous Dirichlet condition in the remaining boundaries. In the following, we provide three numerical experiments where the aims are to show the influences of the saline flow and the external force.

\begin{figure}
	\begin{minipage}[t]{0.49\linewidth}
		\centering
		\includegraphics[width=\linewidth,valign=t]{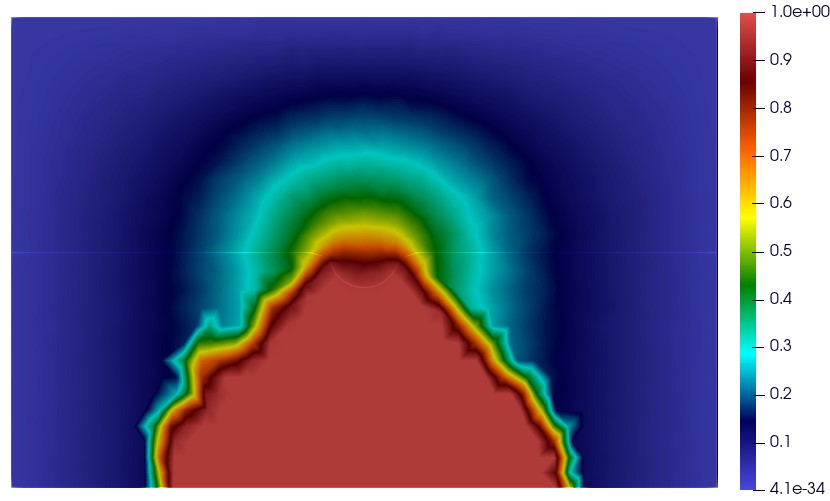}
	\end{minipage}
	\vspace{0.01\textwidth}
	\begin{minipage}[t]{0.49\linewidth}
		\centering
		\includegraphics[width=\linewidth,valign=t]{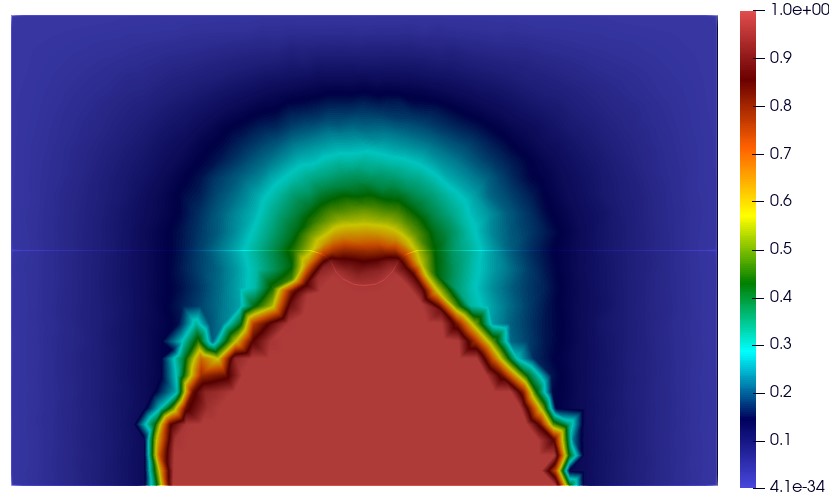}
	\end{minipage}
	\begin{minipage}[t]{0.49\linewidth}
		\centering
		\includegraphics[width=\linewidth,valign=t]{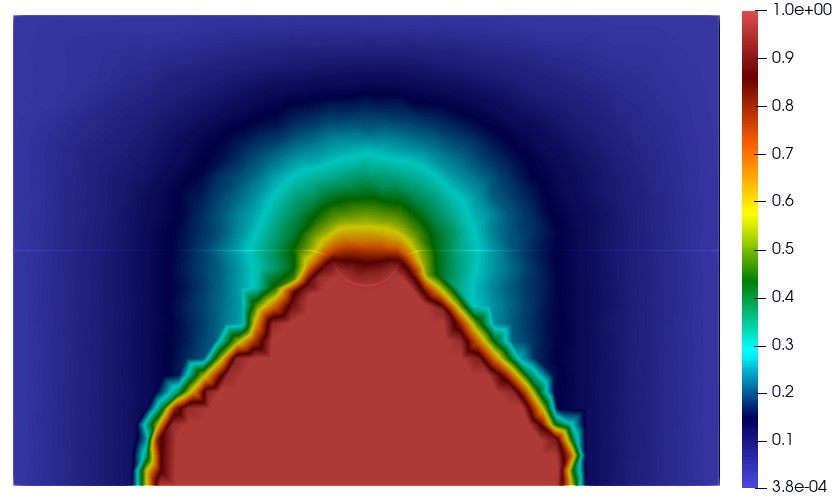}
	\end{minipage}
	\vspace{0.01\textwidth}
	\begin{minipage}[t]{0.49\linewidth}
		\centering
		\includegraphics[width=\linewidth,valign=t]{figs/TST1/1PHIT3.jpg}
	\end{minipage}
	\caption{Snapshot of evolution of the potentials 
		at four time moments $t=\frac{T}{6}$, $\frac{T}{2}$,   $\frac{3T}{4}$, $T$. } 
	\label{fig:test10}
\end{figure}
\subsection{Test 1: heat transfer and blood flow }\label{test1}
The aim of this test is to show that our model is validated by comparing with the results from the literature. First of all, we mention that the impact of saline viscosity and the external forces are neglected, which is expressed as $\boldsymbol{u}_s=0$ and $\textbf{F}=0$. In this scenario, the continuity condition is used in place of the saline heat and the Dirichlet condition of viscosity on the boundary of $\Sigma_3$, which are  $(\eta_b(\theta_b) \nabla \theta_b) \cdot \boldsymbol{n}_b=-(\eta_{ts}(\theta_{ts})\nabla \theta_{ts}) \cdot \boldsymbol{n}_{b}$ on $\Sigma_8$  and $\boldsymbol{u}=\boldsymbol{u}_e $ on $\Sigma_3$, respectively.  

The first remark is that the computed potential evolves very slowly during the time iterations, see Figure \ref{fig:test10}. This can be justified by the fact that the only data in the potential equation is the source $\varphi_i$ which is constant and the electrical conductivity $\sigma(\theta) = \sigma_0 \exp(0.015(\theta-\bar{\theta}))$. Thus, we omit the figures of the potential as there is no significant change during the iterations.
 
Now, let comment on the mechanism of how and why we got our numerical results. The applied potential increases the temperature of the tissue part near to the catheter and then diffusing in cardiac tissue domain, see Figure \ref{fig:test11}. Thanks to the the continuity on the boundaries, the temperature of the blood domain is also increased, this can be justified by the presence of the electric field term $-\sigma_{ts}(\theta_{ts})\nabla \varphi_{ts}\cdot \nabla \varphi_{ts} $. It is important to mention that this temperature is also influenced by the presence of the blood. More precisely, it is transported in the direction of the fluid because of the presence of the term $\boldsymbol{u}\cdot\nabla \theta_b$. On the other hand, it is clear that the speed of the fluid and the pressure in the fluid part are influenced by the viscosity which depends on the temperature $\theta_b$. More precisely, we see a flow recirculation zone, with a local enhance of velocity upstream to the catheter. While it remains almost uniform in the other regions far from the catheter at time $T/6$. In time progress, we can see that the velocity and the pressure are influenced by the presence of the viscosity, see the second column of Figure \ref{fig:test11}. 

To cut it short, this test shows the influence of each term in our model, it shows the coupling effects, that is to say, the fluid, the pressure, the temperatures, and the potentials are affecting each other. 

There is close agreement between the model and the experimental
results from the literature. 
\begin{figure}
	\begin{minipage}[t]{0.49\linewidth}
		\centering
		\includegraphics[width=\linewidth,valign=t]{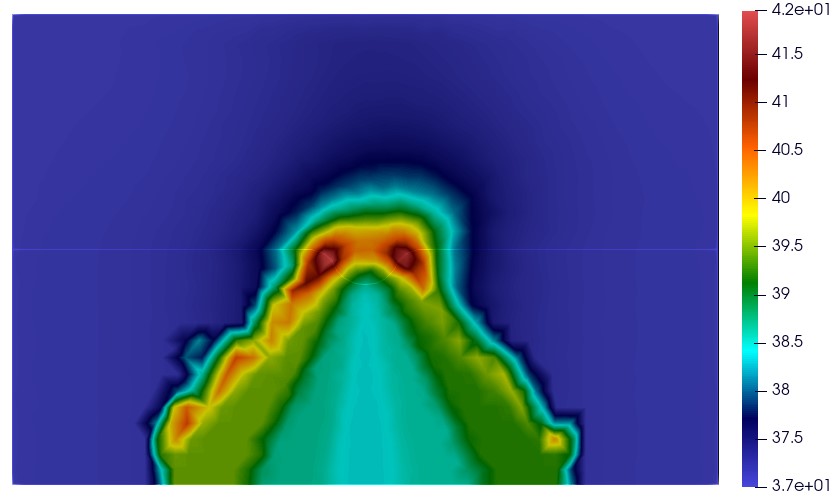}
	\end{minipage}
	\vspace{0.01\textwidth}
	\begin{minipage}[t]{0.49\linewidth}
		\centering
		\includegraphics[width=\linewidth,height=2.9cm,valign=t]{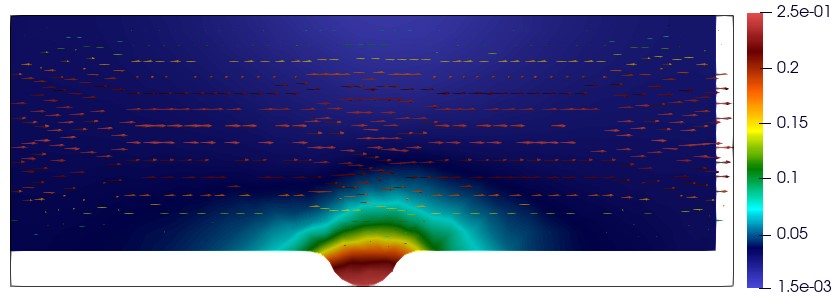}
	\end{minipage}
	\begin{minipage}[t]{0.49\linewidth}
		\centering
		\includegraphics[width=\linewidth,valign=t]{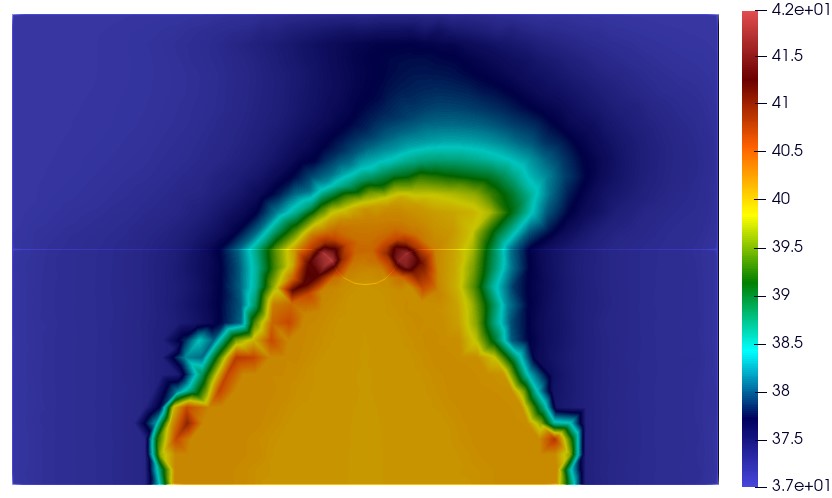}
	\end{minipage}
	\vspace{0.01\textwidth}
	\begin{minipage}[t]{0.49\linewidth}
		\centering
		\includegraphics[width=\linewidth,height=2.9cm,valign=t]{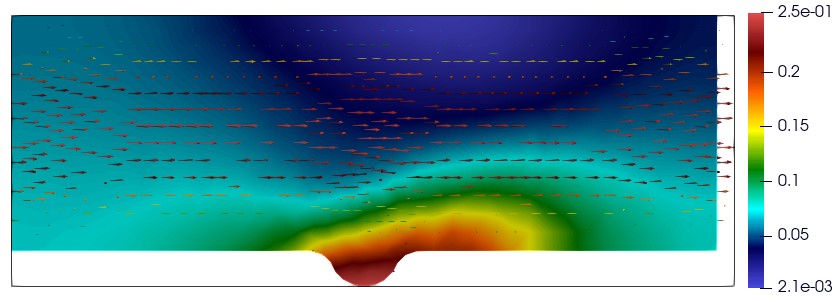}
	\end{minipage}
	\begin{minipage}[t]{0.49\linewidth}
		\centering
		\includegraphics[width=\linewidth,valign=t]{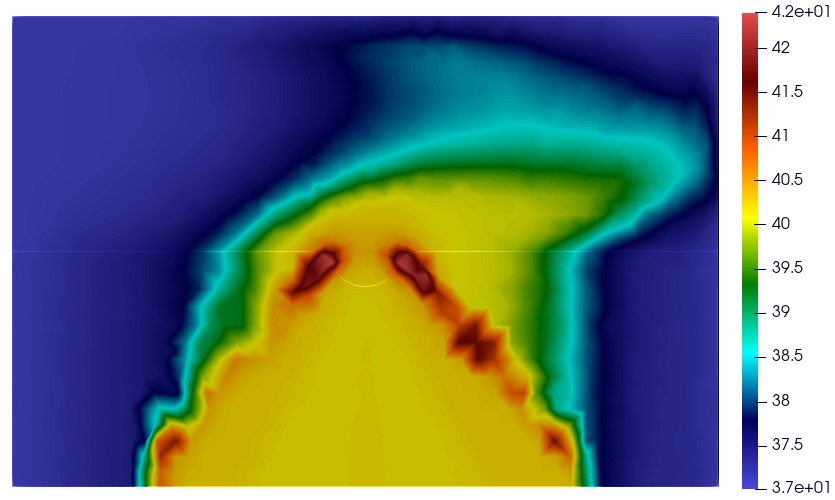}
	\end{minipage}
	\vspace{0.01\textwidth}
	\begin{minipage}[t]{0.49\linewidth}
		\centering
		\includegraphics[width=\linewidth,height=2.9cm,valign=t]{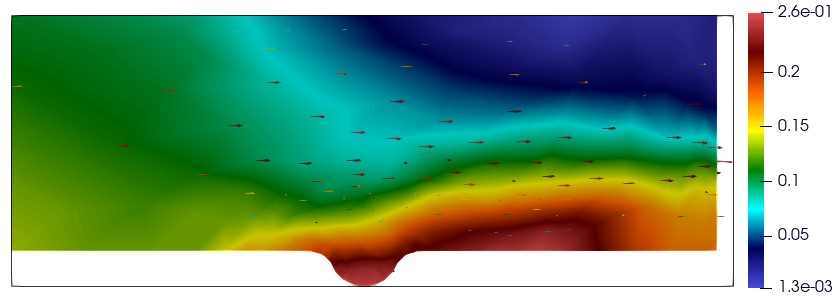}
	\end{minipage}
	\caption{Test 1 : evolution of heats (column 1), velocity and pressure (column 2) 
		at three time moments $t=\frac{T}{6}$ (line 1), 
		$t= \frac{T}{2}$ (line 2) and $t=T$ (line 3). } 
	\label{fig:test11}
\end{figure}

\subsection{Test 2: the saline flow effect}
In this test, we aim to demonstrate the effect of the saline flow. As it is shown in Test $1$ (cf. Subsection \ref{test1}), the temperature in the neighborhoods of the catheter achieves a critical values between $40^{\circ} \mathrm{C}$ and $42^{\circ} \mathrm{C}$. Thus, naturally it is necessary to cool down this zone and make it's temperature down. For that, we need to inject a fluid where the saline heat is $20^{\circ} \mathrm{C}$ i.e $\theta_b=\theta_{ts}=\theta_s=20^{\circ} \mathrm{C}$ on $\Sigma_8$. This can be done by  considering the following 
$$\boldsymbol{u} =\boldsymbol{u}_s= \left(\begin{array}{l}  \displaystyle \frac{20}{r}(x-\frac L 2  +r)(\frac L 2 +r -x)(\frac L 2   -x) \\ \displaystyle  \frac{-20}{r} (x-\frac L 2  +r)(\frac L 2 +r -x)y \end{array}\right)$$
 on boundary $\Sigma_8$. To present these evolution, we show in Figure \ref{fig:test2} the results of numerical simulations at three different times $t = \frac{T}{6},\, \frac{T}{2},\, T $, where each row of the figure represents the corresponding time in the same order. In the first column, we show the heats $\theta_b$ and $\theta_{ts}$, and in the second column, we show the velocity field and the pressure. Clearly, we notice that the injected saline flow $\textbf{u}_s$ diminishes the calculated heats. This leads to the possibility of cooling the domain by the saline fluid from $\Sigma_{8}$. In addition, we observe the rotation of the fluid in the areas subject to heat variations, especially in the area near the outlet boundary $\Sigma_3$.

\begin{figure}[pos=!ht]
	\begin{minipage}[t]{0.49\linewidth}
		\centering
		\includegraphics[width=\linewidth,valign=t]{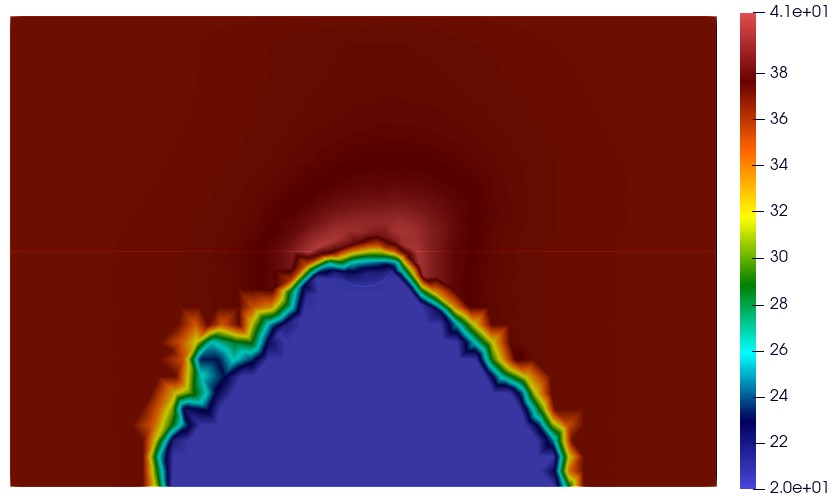}
	\end{minipage}
	\vspace{0.01\textwidth}
	\begin{minipage}[t]{0.49\linewidth}
		\centering
		\includegraphics[width=\linewidth,height=2.9cm,valign=t]{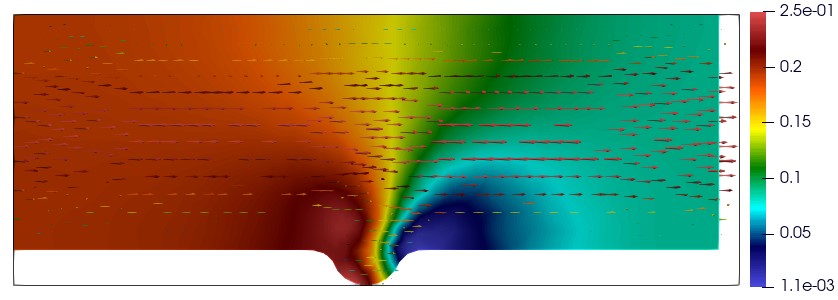}
	\end{minipage}
	\begin{minipage}[t]{0.49\linewidth}
		\centering
		\includegraphics[width=\linewidth,valign=t]{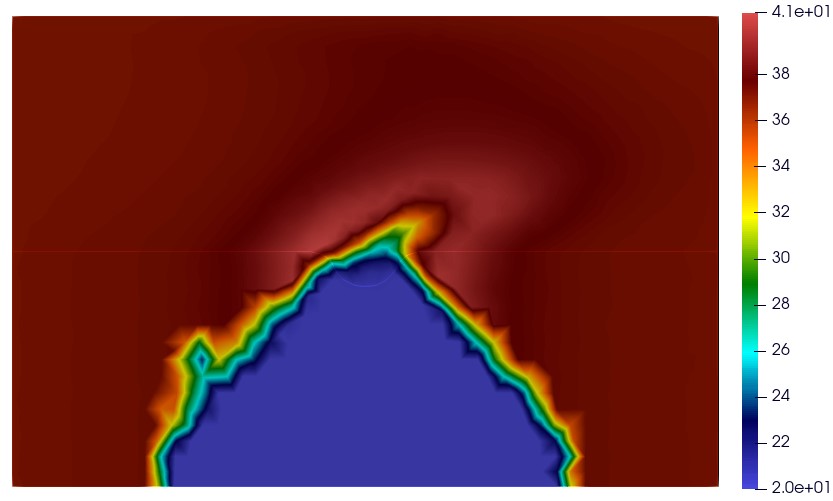}
	\end{minipage}
	\vspace{0.01\textwidth}
	\begin{minipage}[t]{0.49\linewidth}
		\centering
		\includegraphics[width=\linewidth,height=2.9cm,valign=t]{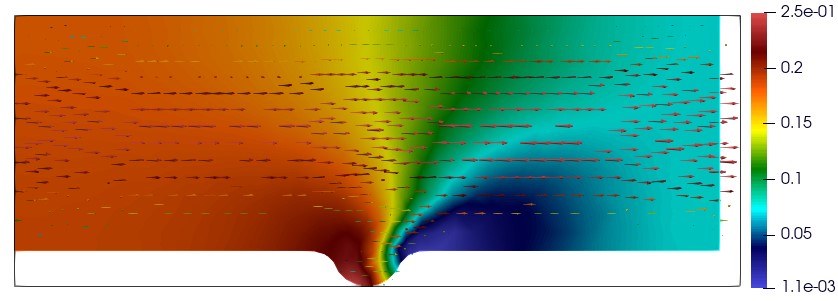}
	\end{minipage}
	\begin{minipage}[t]{0.49\linewidth}
		\centering
		\includegraphics[width=\linewidth,valign=t]{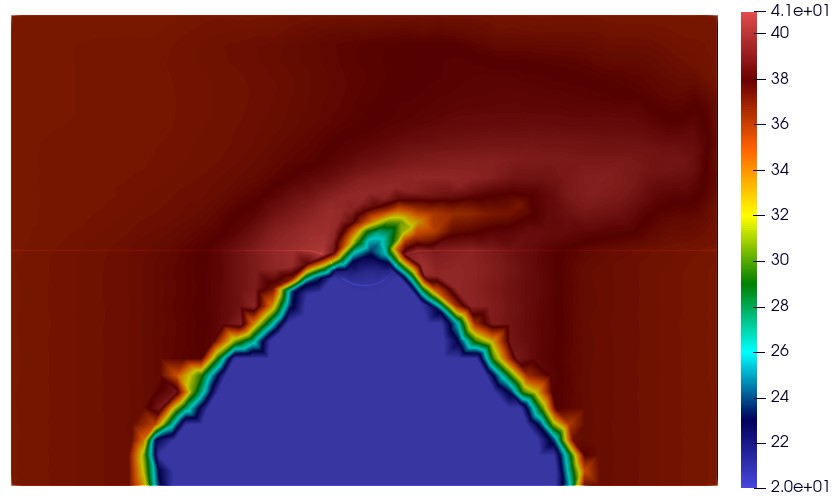}
	\end{minipage}
	\vspace{0.01\textwidth}
	\begin{minipage}[t]{0.49\linewidth}
		\centering
		\includegraphics[width=\linewidth,height=2.9cm,valign=t]{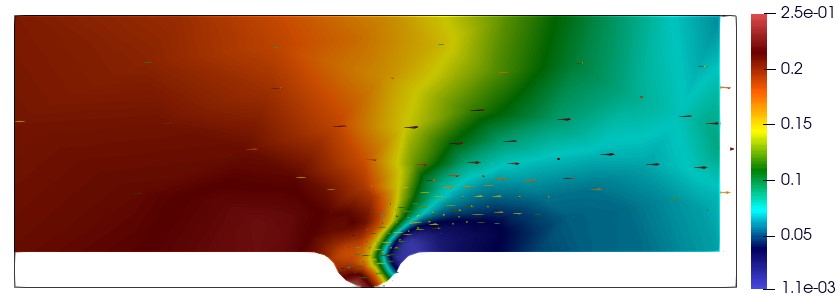}
	\end{minipage}
	\caption{Test 2 : evolution of heats (column 1), velocity and pressure (column 2)
		at three time moments $t=\frac{T}{6}$ (line 1), 
		$t= \frac{T}{2}$ (line 2) and $t=T$ (line 3). } 
	\label{fig:test2} 
\end{figure}
\subsection{Test 3: the external force effect} 
In final test, we are interested in the behavior of the heats when the fluid source term is non-zero. Thus, we consider the fluid source $\textbf{F}= - \left(\begin{array}{l}  0\\ 10^{-3}  9.81/303 \left( \theta_b - \bar{\theta} \right)   \end{array}\right)$ as in Boussinesq equations.  Figure \ref{fig:test3} represents the evolution of the heats $\theta_i$ (first column) and of the velocity and pressure (column 2) at times $t=\frac{T}{6}$, $\frac{T}{2},$ and $T$. 
We notice the rotation of the fluid in the areas subject to heat variations, especially in the area near the outlet boundary $\Sigma_3$.
This is justified by the structure of the source term $\textbf{F}$, in particular the term $\theta_b-\bar{\theta}$. Indeed by the principle of maximum, the velocity changes its sign according to the value of the temperature $\theta_b$ whether it is lower or higher than $\bar{\theta}$. Consequently, we achieve a reduction of the temperature in certain areas of the domain.  
 \begin{figure}
 	\begin{minipage}[t]{0.49\linewidth}
 		\centering
 		\includegraphics[width=\linewidth,valign=t]{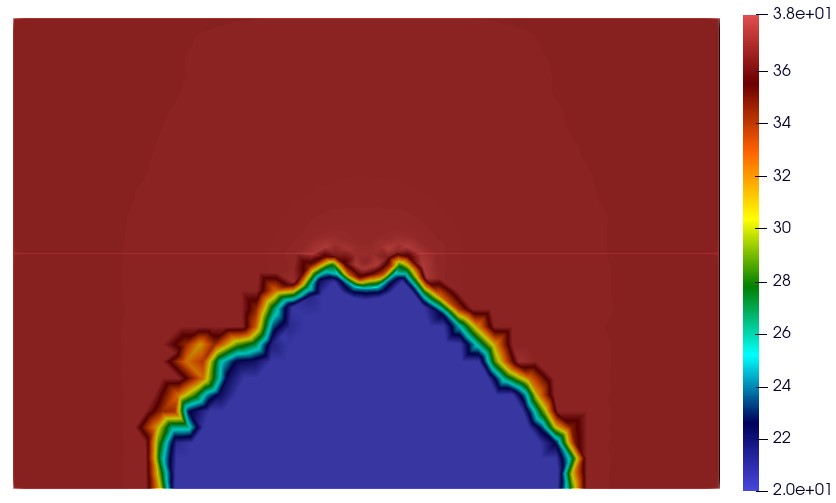}
 	\end{minipage}
 	\vspace{0.01\textwidth}
 	\begin{minipage}[t]{0.49\linewidth}
 		\centering
 		\includegraphics[width=\linewidth,height=2.9cm,valign=t]{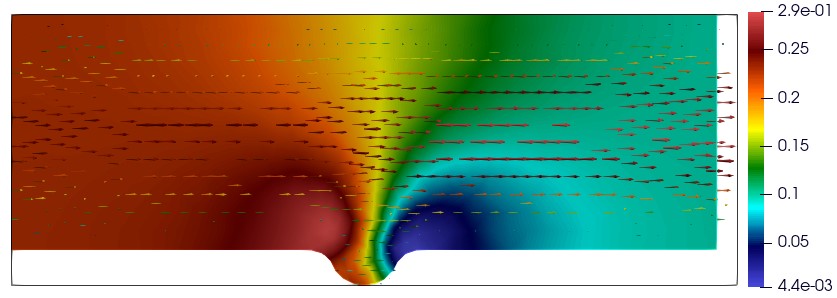}
 	\end{minipage}
 	\begin{minipage}[t]{0.49\linewidth}
 		\centering
 		\includegraphics[width=\linewidth,valign=t]{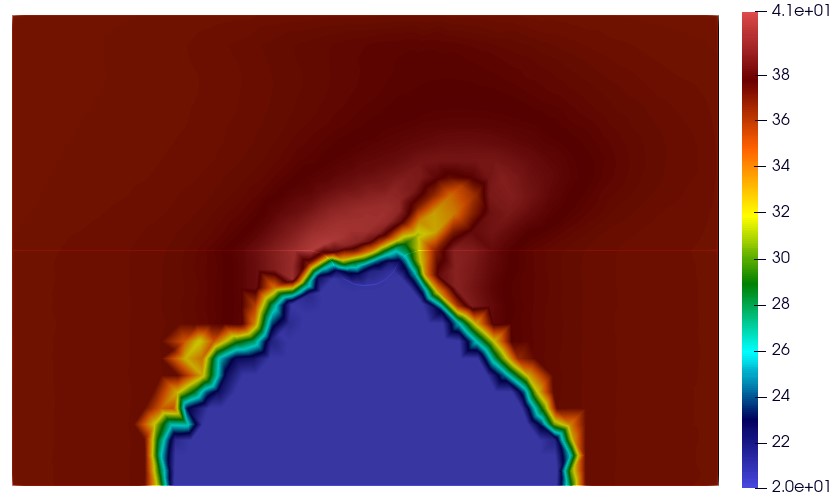}
 	\end{minipage}
 	\vspace{0.01\textwidth}
 	\begin{minipage}[t]{0.49\linewidth}
 		\centering
 		\includegraphics[width=\linewidth,height=2.9cm,valign=t]{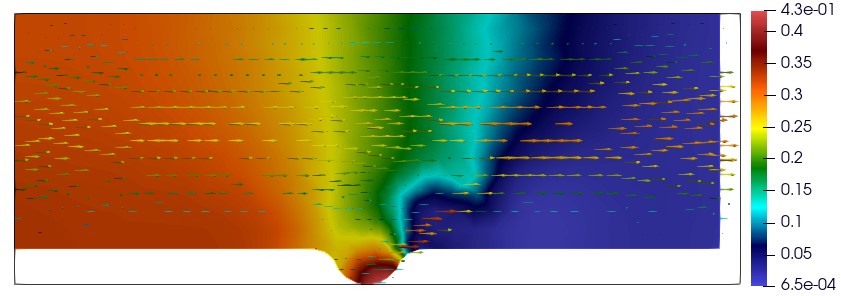}
 	\end{minipage}
 	\begin{minipage}[t]{0.49\linewidth}
 		\centering
 		\includegraphics[width=\linewidth,valign=t]{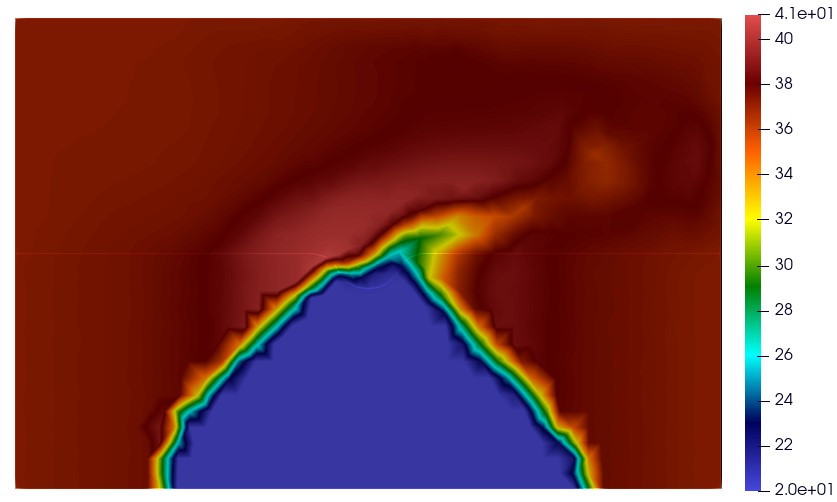}
 	\end{minipage}
 	\vspace{0.01\textwidth}
 	\begin{minipage}[t]{0.49\linewidth}
 		\centering
 		\includegraphics[width=\linewidth,height=2.9cm,valign=t]{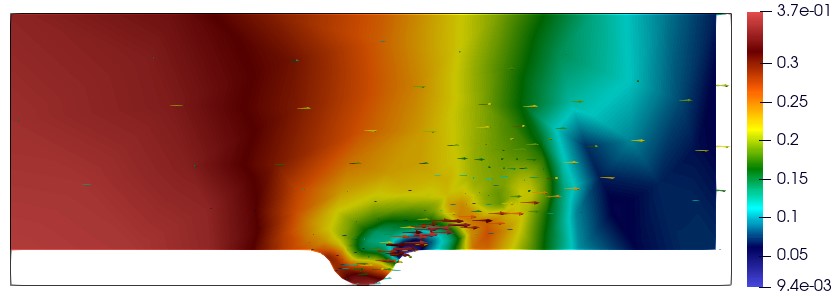}
 	\end{minipage}
 	\caption{Test 3 : evolution of heat (column 1), velocity and pressure (column 2) 
 		at three time moments $t=0$ (line 1), 
 		$t= \frac{T}{2}$ (line 2) and $t=T$ (line 3). }
 	\label{fig:test3}
 \end{figure}
\section{Conclusion and perspectives}\label{conclusion}
In this paper, a new coupled electro-thermo-fluid RFA model describing radiofrequency ablation phenomena in cardiac tissue and Newtonian fluid medium governed by the incompressible Navier-Stokes has been proposed. The proposed model can be considered as an improved model in \cite{B23}. Indeed, their model consists of a coupled thermistor and the incompressible Navier-Stokes equations that describe the evolution of temperature, velocity, and  potential only in the blood vessel. 

In this paper, the existence of the global solutions has been proved using the Faedo-Galerkin with a priori estimates and compactness arguments. In addition, numerical simulations in different cases have been illustrated in a two-dimensional space using the finite element method. 

We propose a general mathematical framework that is capable of including the following features: a radiofrequency ablation phenomenon for cardiac tissue, predicting the temperature of the tissues, the saline flow, and the external force. These features are demonstrated in our numerical simulations. We believe this work can lead to interesting perspectives, such as optimal control models and inverse problems, namely the identification of the frequency factor of different tissue types. In addition, other perspectives consist of improving the bidomain model by adding the stochastic effects, for instance in the incompressible Navier-Stokes equation, see \cite{BTZ22}.  
 
\bibliographystyle{cas-model2-names} 
\bibliography{BibFile}

\end{document}